\def\bea{\begin{eqnarray}}
\def\eea{\end{eqnarray}}
\def\nn{\nonumber}
\theoremstyle{thmstyleone}%
\newtheorem{theorem}{Theorem}
\theoremstyle{thmstyletwo}%
\newtheorem{lemma}{Lemma}
\newtheorem{assumption}{Assumption}
\theoremstyle{thmstylethree}%
\newtheorem{definition}{Definition}%
\begin{document}

\title[Parallel algorithms for maximizing monotone $OSS$ functions]{Parallel algorithms for maximizing monotone one-sided $\sigma$-smooth functions}


\author[1]{\fnm{Hongxiang} \sur{Zhang}}\email{zhanghx010@emails.bjut.edu.cn}

\author[2]{\fnm{Yukun} \sur{Cheng}}\email{ykcheng@amss.ac.cn}

\author[3]{\fnm{Chenchen} \sur{Wu}}\email{wu\_chenchen\_tjut@163.com}

\author[1]{\fnm{Dachuan} \sur{Xu}}\email{xudc@bjut.edu.cn}

\author[4]{\fnm{Dingzhu} \sur{Du}}\email{dzdu@utdallas.edu}

\affil[1]{\orgdiv{Beijing Institute for Scientific and Engineering Computing}, \orgname{Beijing University of Technology}, \orgaddress{\city{Beijing}, \postcode{100124},  \country{P.R. China}}}

\affil*[2]{\orgdiv{School of Business}, \orgname{Suzhou University of Science and Technology}, \orgaddress{\city{Suzhou}, \postcode{215009}, \country{P.R. China}}}

\affil[3]{\orgdiv{College of Science}, \orgname{Tianjin University of Technology}, \orgaddress{\city{Tianjin}, \postcode{300384}, \country{P.R. China}}}

\affil[4]{\orgdiv{Department of Computer Science}, \orgname{University of Texa, Dallas}, \orgaddress{\city{Dallas}, \postcode{75083}, \country{USA}}}

\abstract{In this paper, we study the problem of maximizing a monotone normalized one-sided $\sigma$-smooth ($OSS$ for short) function $F(x)$, subject to a convex polytope (no need to downward-closed \cite{GSS2021}). A function $F(x)$ is one-sided $\sigma$-smooth if $\frac{1}{2}u^{T}\nabla^{2}F(x)u\leq \sigma\cdot\frac{\|u\|_{1}}{\|x\|_{1}}u^{T}\nabla F(x)$, for all $x,u\geq 0, x\neq 0$. This problem was first introduced by Mehrdad et al. \cite{GSS2021} to characterize the multilinear extension of  some set functions.
Different with the serial algorithm with name Jump-Start Continuous Greedy Algorithm by Mehrdad et al. \cite{GSS2021}, we propose Jump-Start Parallel Greedy (JSPG for short) algorithm, the first parallel algorithm, for this problem. The approximation ratio of JSPG algorithm is proved to be $((1-e^{-\left(\frac{\alpha}{\alpha+1}\right)^{2\sigma}})-\epsilon)$ for any any number $\alpha\in(0,1]$ and $\epsilon>0$, which improves the approximation ratio of JSCG algorithm in \cite{GSS2021}. We also prove that our JSPG algorithm runs in $(O(\log n/\epsilon^{2}))$ adaptive rounds and consumes $O(n \log n/\epsilon^{2})$ queries, where the number of adaptive rounds and function evaluation queries approximately matches the known results for parallel submodular maximization. In addition, we study the stochastic version of maximizing monotone normalized $OSS$ function, in which the objective function $F(x)$ is defined as $F(x)=\mathbb{E}_{y\sim T}f(x,y)$. Here $f$ is a stochastic function with respect to the random variable $Y$, and $y$ is the realization of $Y$ drawn from a probability distribution $T$. For this stochastic version, we design Stochastic Parallel-Greedy (SPG) algorithm, which achieves a result of $F(x)\geq(1 -e^{-\left(\frac{\alpha}{\alpha+1}\right)^{2\sigma}}-\epsilon)OPT-O(\kappa^{1/2})$, with the same time complexity of JSPG algorithm. Here $\kappa=\frac{\max \{5\|\nabla F(x_{0})-d_{o}\|^{2}, 16\sigma^{2}+2L^{2}D^{2}\}}{(t+9)^{2/3}}$  is related to the preset parameters $\sigma, L, D$ and time $t$.}

\keywords{$OSS$ function, Parallel algorithm, Down-closed, Monotone}



\maketitle

\section{Introduction}
\label{sec:introduction}

This paper studies the problem of maximizing a monotone normlized one-sided $\sigma$-smooth ($OSS$) function subject to a convex polytope (no need to downwards-closed \cite{GSS2021}).
A function $F$ is one-sided $\sigma$-smooth if
$\frac{1}{2}u^{T}\nabla^{2}F(x)u\leq \sigma\cdot\frac{\|u\|_{1}}{\|x\|_{1}}u^{T}\nabla F(x)$,
for all $x,u\geq \textbf{0}, x\neq \textbf{0}$ (See Appendix for examples of $OSS$ functions).
The problem to maximize $OSS$ functions plays an important role in many fields, including machine learning \cite{mm2019,smv2017}, document aggregation \cite{zvm2013},  web search \cite{fs2006}, recommender systems \cite{dhx2006,jj1998}.
In this paper, we consider the deterministic and stochastic settings of the $OSS$ maximization problems given the basis of the polyhedron constraint, and design two parallel algorithms for them, respectively.
Formally the deterministic $OSS$ maximization problem is defined as follows:
\bea\max F(x)~~~~s.t.~~x\in[0,1]^{n}, x\in P,\eea
where $P$ is a convex polytope, and $F:[0,1]^{n}\rightarrow  \mathbb{R}_{+}$ is a monotone normalized $OSS$ function. For the stochastic $OSS$ problem, let $x\in X\subset[0,1]^{n}$ be an optimization variable and $Y$ be a random variable, which both determine the choice of a stochastic function $f:X\times Y\rightarrow \mathbb{R}$. The stochastic $OSS$ maximization problem is formally defined as
\bea
\max F(x):=\max \mathbb{E}_{y\sim T}\left[f(x,y)\right]~~~~
s.t.~~x\in[0,1]^{n}, x\in P,
\eea
where $P$ is a convex polytope and $\textbf{0}\in P$, $F$ is the expected value of the stochastic function $f$ with respect to the random variable $Y$, and $y$ is the realization of the random variable $Y$ drawn from a distribution $T$. $\mathbb{E}_{y\sim T}\left[f(x,y)\right]$ is a monotone normalized $OSS$ function.

The concept of $OSS$ was first introduced by Mehrdad et al. \cite{GSS2021} to describe the properties of multilinear extension \cite{gmr2000} of submodular set functions or diversity functions. Similar to Lipschitz smoothness \cite{alkb2017}, the property of $OSS$ can control the approximation ratio and the complexity of related algorithms.
The main method to maximize the $OSS$ problem is the continuous greedy, whose core is to maximize the multilinear extension function. Submodularity ensures some nice properties for the multilinear extension. For instance, concavity along a direction $d\geq \textbf{0}$ is used to bound a Taylor series expansion in the continuous greedy analysis. Since nonsubmodular multilinear extensions will not have this concavity property, Mehrdad et al proposed a "one-sided $\sigma$-smoothness" condition which guarantees an alternative bound based on Taylor series. Many well studied functions such as continuous DR-submodular functions are $OSS$ functions when $\sigma=0$.
Chandra et al. \cite{cq2019, cvz2014} stated that the multilinear extension of submodular set function is concave in the non-negative direction. They gave a $(1-1/e-\epsilon)$-approximation for the maximization problem of submodular set function subject to a cardinality constraint.
When $\sigma> 0$, $OSS$ function includes the multlinear extension of a diversity function, proved by Mehrdad et al. \cite{GSS2021}. They provided a tight ($1-1/e^{(1-\alpha)(\alpha/(\alpha+1))^{2\sigma}} $)-approximation for the  maximization problem of monotone normalized $OSS$ function, but their polyhedral constraint must have a downward closed property.

Mehrdad et al. \cite{GSS2021} first adopted the Frank-Wolfe algorithm to solve $OSS$ problems. Our parallel algorithm is inspired by
the Frank-Wolfe Algorithm, but is different from the parallelism of the traditional discrete Greedy algorithms. To be specific, our parallel algorithm improves the computational efficiency by avoiding solving the optimization problem $\max_{\nu\in P}\nu^{T}\nabla F(x)$ in each iteration.
Recently, many researchers focused on the study of parallel algorithms~\cite{an2020,pjgb2019} to solve optimization problems under big data. 
Eric and Singer \cite{es2018} first proposed a parallelism for submodular set function maximization problem with cardinality constraints. For this problem, Eric et al. and Alina et al. \cite{brs2019,en2019} respectively designed a near-optimal $(1-1/e-\epsilon)$-approximation algorithm with $O(\log n/\epsilon^{2})$ rounds of adaptivity. In \cite{cq2019}, Chandra and Quanrud devised an adaptive algorithm to approximately maximize the multilinear relaxation of a submodular set function subject to packing constraints. The algorithm in \cite{cq2019} achieves a near-optimal $(1-1/e-\epsilon)$-approximation in
$O(\log^{2}m~\log n/\epsilon^{4})$ rounds, where $n$ is the cardinality of the ground set and $m$ is the number of packing constraints.
All the above algorithms are parallel versions of traditional greedy algorithms and respective variants for the submodular function maximization problems. However, there is no specific parallel algorithm to solve the $OSS$ problems.

Generally, one key step to optimize $OSS$ problems \cite{smv2017} or continuous submodular problems \cite{bls2020,mns2011} is to compute a linear optimization problem: $\max \nu^{T}\nabla F(x)$, where $x,~\nu$ belongs to a bounded constraint. However, solving this linear optimization problem requires the exact value of gradient $\nabla F(x)$, which is not feasible for stochastic $OSS$ problem. In this paper,  we focus on the case where $F$ is a monotone normalized $OSS$ function.
For the stochastic $OSS$ problem, an approach to avoid computing $\nabla F(x)$ is modifying the existing algorithms by replacing gradients $\nabla F(x)$
with their stochastic estimation $\nabla f(x,y)$. However, this modification may lead to arbitrarily
poor solutions \cite{hsk2017}. Recently,  Mokhtari  et al. \cite{mhk2020} confirmed that if the feasible region is uniformly bounded, the gradient function $\nabla F(x)$ is $L$-\emph{Lipschitz} continuous and the variance of the unbiased stochastic gradients $\nabla f(x,y)$ is bounded by the constant $\theta^{2}$, then $\mathbb{E}[\|\nabla F(x_{t})-d_{t}\|^{2}]\leq \kappa$, where $d_{t}$ is the Linear replacement of stochastic gradient $\nabla f(x,y)$, ($d_{t}=(1-\rho_{t})d_{t-1}+\rho_{t}\nabla f(x_{t}, y_{t})$, where $\rho_{t}=\frac{4}{t+8}^{2/3}$ is a positive step size dependent on time $t$ and the initial vector $d_{0}$ is $\mathbf{0}$) and $\kappa=\frac{\max \{5\|\nabla F(x_{0})-d_{o}\|^{2}, 16\sigma^{2}+2L^{2}D^{2}\}}{(t+9)^{2/3}}$ ($L, D$ are constants). This conclusion is very helpful for our design of parallel algorithms for stochastic $OSS$ problems.


\subsection{Main Contributions}

 In this paper, we propose two parallel algorithms to maximize the deterministic $OSS$ problem and the stochastic $OSS$ problem, subject to a convex polytope constraint (no need to downwards-closed), respectively.
\begin{itemize}
	\item For the deterministic monotone normalized $OSS$ maximization problem, we design the Jump-Start Parallel-Greedy (JSPG) algorithm by combining different techniques, including Jump-Start, Frank-Wolfe, Continuous Greedy and parallel computing.
JSPG algorithm iterates by constantly accessing the relationship between the $OPT$ and the gradient function $\nabla F(x)$, and finally outputs the solution. We theoretically prove that JSPG algorithm has an approximation ratio of $((1-e^{-\left(\frac{\alpha}{\alpha+1}\right)^{2\sigma}})-\epsilon)$ for any number $\alpha\in[0,1]$ and $\epsilon>0$, and it runs in $(O(\log n/\epsilon^{2}))$ adaptive rounds and consumes $O(n \log n/\epsilon^{2})$ queries. Furthermore, JSPG algorithm can be applied to compute the continuous submodular function, which can output
	a ($1-1/e-\epsilon$) approximately optimal solution.
	\item For the stochastic monotone normalized $OSS$ maximization problem, we design the stochastic Parallel-Greedy (SPG) algorithm, by combining different techniques, including the Linear replacement of stochastic gradient, Frank-Wolfe, Continuous Greedy and parallel computing. SPG algorithm outputs the final solution through constantly accessing the relationship between the $OPT$ and $d_{t}$ in each iteration. Furthermore, SPG algorithm is proved that the final output solution $F(x)$ satisfies   $F(x)\geq(1 -e^{-\left(\frac{\alpha}{\alpha+1}\right)^{2\sigma}}-\epsilon)OPT-O(\kappa^{1/2})$,  and its time complexity is the same with the one of JSPG algorithm.
\end{itemize}

\subsection{Organization}

The remainder of this paper is organized as follows. Section 2 introduced some definitions and necessary lemmas for the algorithms design. In Section 3 and 4, two parallel algorithms are proposed for the deterministic and stochastic $OSS$ problem, respectively. The theoretical analysis for algorithms' efficiency are also provided. The last section concludes this work.

\section{Preliminaries}\label{sec2}
In this section, we would introduce some definitions and notations in advance, which are used throughout the whole paper.
Let $F$ be a monotone normalized $OSS$ function. For any vectors $x,y\in[0,1]^{N}$, we say $x\leq y$, if and only if $x_{i}\leq y_{i}$ holds. Let $x\vee y$ be the coordinate-wise maximum of $x$ and $y$, and $x\wedge y$ be the coordinate-wise minimum.

\begin{definition}\label{def1}
{\bf $OSS$ function:} Given a continuously twice differentiable function $F: \mathbb{R}^{n}_{\geq 0} \rightarrow \mathbb{R}$. $F$ is defined to be is
one-sided $\sigma$-smooth ($OSS$ for short) at point $x\geq 0$, if $F$ satisfies
\bea
u^{T}\nabla^{2}F(x)u\leq \sigma\cdot\frac{2\|u\|_{1}}{\|x\|_{1}}u^{T}\nabla F(x),\label{eqde1}
\eea
for all $u\geq0$.  Function $F$ is $OSS$, if (\ref{eqde1}) holds at all points of its domain.
\end{definition}

By Definition \ref{def1}, we have that a $OSS$ function $F$ is $OSS$ at any non-zero point of its domain. This property of $OSS$ captures the concavity in the forward direction when $\sigma=0$. Furthermore, inequality (\ref{eqde1}) means that the second derivative of $F(x)$ is bounded by the linear form of its gradient function. In fact, this inequality is one of the necessary conditions for the approximation algorithm design for the non-convex maximization problem.


\begin{definition}\label{def2}
{\bf Monotone $\&$ Normalized:} A $OSS$ function $F: \mathbb{R}^{n}_{\geq 0} \rightarrow R$ is monotone if  $F(x)\leq F(y)$, for all $x,y\in \mathbb{R}^{n}_{\geq 0}$, and $x\leq y$; and $F(x)$ is normalized if it satisfies $F(\textbf{0})=0$.
\end{definition}

Generally, when a continuous Frank-Wolfe technique is used to solve $OSS$ problem, a local condition, i.e., $\eta$-local, can help to analyze the approximation ratio and the complexity of the algorithm.
\begin{definition}\label{def4}
{\bf $\eta$-local:} For any $\eta\geq 0$, $\epsilon\in[0,1]$, $x+\epsilon u\in P$, we say $F\in \mathbb{C}^{2}$ is $\eta$-local at $x,u$ if
\bea
u^{T}\nabla F(x+\epsilon u)\geq (1-\eta\epsilon)u^{T} \nabla F(x)
\eea
holds for all $u, x\in P$, where $\mathbb{C}^{2}$ is the twice continuous differentiable space.
\end{definition}

The core of a parallel algorithm is the technique of "threshold loop call". Generally, a proper threshold can be set by using an estimation of the optimal value $OPT$. Following, we propose some trivial bounds of $OPT$ before our parallel algorithms design.

\begin{lemma}\label{lem1}{\bf OPT BOUNDs:}
Let $OPT$ be the optimal value of the problem to maximize the monotone normalized $OSS$ function $F(x)$, subject to a convex polytope $P$.\footnote{The basis of the polyhedron $P$ is required to exist and to be solvable in polynomial time.} Denote $r$ to be the rank of the polytope $P$. Then
\bea
\max\{F(x)\mid\|x\|_{1}=r, x\in P\cap[0,1]^{n}\}\leq OPT
\leq\min\{ F(\max_{x\in P}\|x\|_{1}), F(\textbf{1})\},\label{bound1}\nn
\eea
or
\bea
F\left(r\frac{\{\arg \max_{z} \|z\|_{1}, z\in P\}}{\|z\|_{2}}\right)
\leq OPT 
\leq\min\{F(\max_{x\in P}\|x\|_{1}), F(\textbf{1})\}.\label{bound2}\nn
\eea

\end{lemma}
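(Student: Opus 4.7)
The plan is to reduce the lemma to two routine ingredients: the definition of $OPT$ as the maximum of $F$ over $P \cap [0,1]^{n}$, and the monotonicity of $F$ from Definition~\ref{def2}. First I would establish the lower-bound inequalities. Because the set $\{x : \|x\|_{1} = r,\ x \in P \cap [0,1]^{n}\}$ is a (nonempty) subset of the feasible region, its maximum of $F$ is automatically at most $OPT$; that disposes of the first display. For the alternative lower bound in the second display, I would check that the vector $z_{\mathrm{scaled}} := r \cdot z^{*}/\|z^{*}\|_{2}$, with $z^{*} \in \arg\max_{z \in P}\|z\|_{1}$, lies in $P \cap [0,1]^{n}$: the scaling factor is calibrated to the rank $r$ so $z_{\mathrm{scaled}}$ stays inside the unit cube, and the basis of $P$ (whose existence is assumed in the footnote) supplies the containment in $P$. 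Feasibility then yields $F(z_{\mathrm{scaled}}) \le OPT$ by definition of the maximum.

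Next I would handle the upper-bound inequalities, both of which rely on monotonicity. The estimate $OPT \le F(\mathbf{1})$ is essentially immediate: every feasible $x$ satisfies $x \le \mathbf{1}$ coordinatewise, so $F(x) \le F(\mathbf{1})$ by Definition~\ref{def2}, and taking a maximum over the feasible set gives the bound. For the remaining piece $OPT \le F(x^{\max})$, where $x^{\max}$ attains $\max_{x \in P}\|x\|_{1}$, I would argue that $x^{\max}$ can be selected (as a vertex of $P$ meeting the cube on a suitable face) to coordinatewise dominate any candidate optimal point, at which point monotonicity closes the loop.

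The main obstacle I foresee is precisely this last step: having the largest $\ell_{1}$ norm does not, by itself, force coordinatewise dominance over an arbitrary feasible point, so the monotonicity argument is not automatic. I expect to need a structural fact about the polyhedral basis of $P$---either a matroid-like exchange lemma exploiting the assumption that the basis of $P$ is computable in polynomial time, or a chaining argument producing an intermediate dominating point in $P$ with $\ell_{1}$ norm matching the maximum. The lower-bound rescaling step is also mildly delicate, since one must keep $z_{\mathrm{scaled}}$ simultaneously inside $P$ and inside the unit cube; I would handle this by tuning the scaling factor relative to $r$ and reducing to the same basis-solvability hypothesis.
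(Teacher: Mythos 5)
Your plan coincides with the paper's own (three-sentence) proof on every step the paper actually carries out: the first lower bound is dismissed as obvious because the constrained set is a subset of the feasible region; the second lower bound is obtained by asserting that the rescaled $\ell_{1}$-maximizer lies in $P\cap[0,1]^{n}$ and is therefore feasible; and the bound $OPT\le F(\mathbf{1})$ is monotonicity plus $x\le\mathbf{1}$. (One cosmetic discrepancy: the paper's proof normalizes by $\|z\|_{1}$ rather than the $\|z\|_{2}$ appearing in the lemma statement; you followed the statement. Neither version's membership in $P\cap[0,1]^{n}$ is actually verified by the paper beyond the bare assertion, so your worry about "keeping $z_{\mathrm{scaled}}$ simultaneously inside $P$ and inside the unit cube" is well placed, and the convexity of $P$ together with $\mathbf{0}\in P$ is what one would invoke for the containment in $P$ when the scaling factor is at most $1$.)

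The genuine divergence is the piece you correctly single out as the obstacle: $OPT\le F(\max_{x\in P}\|x\|_{1})$. The paper's entire justification for the upper bound is "the monotone property of $F(x)$ promises the upper bound, since $x\leq \mathbf{1}$," which only establishes the $F(\mathbf{1})$ half of the minimum; no argument is given that an $\ell_{1}$-norm maximizer coordinatewise dominates an arbitrary feasible point, and, as you observe, that implication is false in general. Your proposed repairs (a matroid-style exchange on the basis of $P$, or a chaining argument through an intermediate dominating point) are not what the paper does --- the paper does nothing here --- and you do not actually close the gap either. So the honest summary is: you have reproduced the paper's argument where it exists, and identified as unresolved exactly the inequality the paper leaves unproved. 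If you want a complete proof, the cleanest fix is probably to reinterpret $F(\max_{x\in P}\|x\|_{1})$ through the one-dimensional surrogate $G(l_{x})=F(x)$ used later in Theorem~\ref{thm1}, where an upper bound in terms of the maximal $\ell_{1}$ mass is meaningful; as literally written, $F$ applied to a scalar is not even well typed.
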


The lower bound in (\ref{bound1}) is obvious. As $r$ is the rank of polytope $P$, we have $x=r\frac{\{\arg \max_{z} \|z\|_{1}, z\in P\}}{\|z\|_{1}}\in P\cap [0,1]^n$. Thus $OPT\geq F(x)$, leading to the lower bound of $OPT$ in (\ref{bound2}). In addition, the monotone property of $F(x)$ promises the upper bound, since $x\leq \textbf{1}$.

The notations used in this paper are lised in Table \ref{tab:1}.

\begin{table}[h!]
	\caption{Symbol Description}
	\label{tab:1}
	\begin{tabular}{ll}
		\hline\noalign{\smallskip}
		\noalign{\smallskip}\hline\noalign{\smallskip}
		$\delta, \sigma, \epsilon, \eta $ &:~The preset parameters which are positive; \\
		\noalign{\smallskip}
		$\lambda$                         &:~The estimation value of $OPT$; \\
		\noalign{\smallskip}
		$P$  &:~Polyhedron: such as linear function polyhedron $Ax\leq b$, matrix\\
             &~~ polyhedron, etc.; \\
		\noalign{\smallskip}
		$\{e_{P}\}_{r}$ &:~The set of basis vectors of polytope $P$, where
                        $r$ is the rank of \\
                        &~~polytope $P$. Generally, the base of a polytope is the linear maximum\\
                        &~~independent group of the polytope, and the rank of a polyhedron \\
                        &~~is the number of elements in its linear maximum independent group;\\
		\noalign{\smallskip}
		$\nu$                             &:~Basis Vectors in $\{e_{P}\}_{r}$; \\
		\noalign{\smallskip}
		$u$                               &:~A vector belongs to $[0,1]^{n}$; \\
		\noalign{\smallskip}
		$d_{t}$&:~Estimated value of the gradient $f(x_{t}, y_{t})$;\\
		\noalign{\smallskip}
		$D$ &:~A constant parameter (In Assumption 12) given in advance, used to\\
            &~~bound the Euclidean norm of a vector $x\in P$;\\
		\noalign{\smallskip}
		$\theta$ &:~A constant parameter given in advance, which is used to\\
                 &~~bound $\nabla f(x,y)$;\\
		\noalign{\smallskip}
		$L$                               &:~The Lipschitz parameter.\\
		\noalign{\smallskip}\hline\noalign{\smallskip}
		\noalign{\smallskip}\hline
	\end{tabular}
\end{table}


\section{Jump-Start Parallel-Greedy (JSPG) Algorithm for Deterministic Setting} \label{sec3}
This section concentrates on the deterministic setting, in which the monotone normlized $OSS$ function $F(x)$ is determined by the decision variable $x\in P\cap [0,1]^n$.The Jump-Start Parallel-Greedy (JSPG)  algorithm is designed for the deterministic $OSS$ problem.

\subsection{JSPG Algorithm Design}\label{sec3.1}
JSPG algorithm is inspired by the work \cite{GSS2021}, in which Mehrdad et al. proposed a Jump-Start Continuous Greedy (JSCG) algorithm to solve the monotone normalized $OSS$ maximization problem. In each iteration of JSCG algorithm, a solution is dynamically improved by adding a direction vector $\nu$ to current solution $x$ with a fixed and conservative step size $\delta>0$. The key idea of the JSCP algorithm actually originated from the continuous-greedy algorithm, where the value of a submodular set function is gradually increased by adding only one element $x$ per iteration. Such an operation leads to a slow speed and does not work well for big data applications.

Our JSPG algorithm also applies the Frank-Wolfe skill, which approximates the objective function as a linear function,  at each iteration point $x_{k}$.
\bea
F(x)\approx F(x_{k})+(x-x_{k})^{T}\nabla F(x_{k}).\nn
\eea
Hence
\bea
\max_{x\in P} F(x) ~~\approx~~\max F(x_{k})+(x-x_{k})^{T}\nabla F(x_{k}), x\in P.\nn
\eea
Since $F(x_{k})$ and $(x_{k})^{T}\nabla F(x_{k})$ are constants. The above optimization problem is thus equivalent to the following problem
\bea
\max x^{T}\nabla F(x_{k}), x\in P.\nn
\eea

Before describing the detailed layout of JSPG algorithm in Algorithm \ref{alg1}, let us introduce it briefly as follows. Given a threshold $\lambda$, JSPG algorithm firstly selects an initial feasible solution
  $$x\leftarrow \alpha\arg max_{x\in P} \|x\|_{1},~ \alpha\rightarrow 0,~\alpha\in (0,1].$$
Based on this initial feasible solution, JSPG algorithm goes to identify all good directions from the base set $\{e_{P}\}_{r}$ by Frank-Wolfe skill, that is choosing all $\{\nu_{i}\in \{e_{P}\}_{r} \mid\nu_{i}^{T}\nabla F(x)\geq(1-\epsilon)\mu\lambda\}$ (If the base set of the polyhedron constraint does not exist, we can replace it with the coordinates of the solution vector, and the algorithm still works). JSPG algorithm continues to increase along all these directions uniformly by a dynamical increment $\delta$. The value of $\delta$ depends on $\mu, \eta$, where $\mu=\left(\frac{\alpha}{\alpha+1}\right)^{2\sigma}$. At last, the threshold $\lambda$ is updated when the inner loop has no direction to choose.

JSPG algorithm improves JSCG algorithm from two following perspectives. In each iteration,

\begin{itemize}
  \item Instead of increasing the current solution $x$ only along a single best direction in JSCG algorithm, our JSPG algorithm identifies all good directions from the base set by Frank-Wolfe skill, and increases along all of these directions uniformly.
  \item Instead of increasing the current solution $x$ along a direction by a fixed step size in JSCG algorithm, our JSPG algorithm dynamically adjusts the step size $\delta$.
\end{itemize}


\begin{algorithm}[h!]
\caption{JSPG: Jump-Start Parallel-Greedy($F, \lambda, P, \epsilon$)}
\label{alg1}
\textbf{Input}: $OSS$ function $F:[0,1]^{n}\cap P\rightarrow \mathbb{R}_{+}$; $P$: down-closed convex polytope with rank $r$; $\{e_{P}\}_{r}$: the set of standard vector bases of $P$. \\
\textbf{Parameter}: $\lambda$: the upper bound of $OPT$, $\alpha\in(0,1]$, $\alpha,\epsilon,\eta>0, \sigma\geq 0$.\\ 
\textbf{Output}: A fractional solution $x$.
\begin{algorithmic}[1] 
\State  $x\leftarrow \alpha\arg \max_{x\in P} \|x\|_{1}$.
  \State  $\mu=\left(\frac{\alpha}{\alpha+1}\right)^{2\sigma}$.
  \State  $t\leftarrow \alpha$.
\While{$t< 1~and~\lambda\geq e^{-\mu} OPT$, }
   \State  Let $M_{\lambda}=\{\nu_{i}\in \{e_{P}\}_{r}: \nu_{i}^{T}\nabla F(x)\geq(1-\epsilon)\mu\lambda\}$.
   \State  $S_{x}\leftarrow M_{\lambda}$.
   \While{$S_{x}$ is not empty and $t< 1$,}
              \State \textbf{A}. Choose maximal $\delta$ s.t.
                   \State \quad (1)~~ $F\left(x+\frac{\delta}{\mid S_{x}\mid}\sum_{\nu_{i}\in S_{x}}\nu_{i}\right)-F(x)$
                   \State~~~~~~
                   $\geq\mu(1-\epsilon)^{2}\frac{\delta}{\mid S_{x}\mid}\|\sum_{\nu_{i}\in S_{x}}\nu_{i}\|\lambda$,
                   \State \quad (2)~~ $\delta\leq \min\{\frac{1}{n\eta}, \frac{1}{\mu(1-\epsilon)}\}$,
                   \State \quad (3)~~ $t+\delta\leq1$.
              \State \textbf{B}. $x\leftarrow x+\frac{\delta}{\mid S_{x}\mid}\sum_{\nu_{i}\in S_{x}}\nu_{i},$ \State~~~~$t\leftarrow t+\delta$.
              \State \textbf{C}. Update
              \State~~~~$S_{x}=\{\nu_{i}\in \{e_{P}\}_{r}: \nu_{i}^{T}\nabla F(x)\geq(1-\epsilon)\mu\lambda\}$. \\
              \EndWhile
   \State $\lambda\leftarrow (1-\epsilon)\lambda$.

\EndWhile
\end{algorithmic}
\end{algorithm}

Our algorithm's parallel technique is different from traditional discrete parallel greedy (DPG) algorithm \cite{brs2019,ENV2019}.
Firstly, the DPG algorithm is actually a traditional discrete algorithm.
Our JSPG algorithm is a parallel one based on Frank-Wolfe skill, which can be used to solve the continuous problem. Secondly, the DPG algorithm is only suitable for the discrete submodular optimization problem, as the property of set submodularity ensures that the DPG algorithm easily addresses the relationship among the $OPT$, the iterative function value $F(x)$ and the threshold $\lambda$. This relationship is critical to obtain the approximation ratio. Our parallel algorithm is not restricted to the property of submodularity and multilinearity, and thus can be applied to all other non-submodular $OSS$ problems. To overcome the missing of submodularity and multilinearity, we use the quadratic differentiability and the $OSS$ property of objective function to analyze JSPG algorithm.

\subsection{Approximation Ratio of JSPG Algorithm} \label{sec3.2}
The property of $OSS$ enables us to bound the first order Taylor's polynomial of the objective function, which is helpful to compute the approximation ratio of JSPG algorithm.

\begin{lemma}\label{lem2}
\cite{GSS2021} Let $x\in [0,1]^{n}\backslash \{0\}, u\in[0,1]^{n}$. There exits $\epsilon >0$ such that $(x+\epsilon u)\in[0,1]^{n}$. Let $F: [0,1]^{n} \rightarrow \mathbb{R}$ be a monotone normalized function which is $OSS$ on $[x,x+\epsilon u]$, then we have
\bea
u^{T}\nabla F(x+\epsilon u)\leq \left(\frac{\|x+\epsilon u\|_{1}}{\|x\|_{1}} \right)^{2\sigma}u^{T}\nabla F(x).\label{eqinlem2}
\eea
\end{lemma}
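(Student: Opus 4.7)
The plan is to reduce the claim to a one–dimensional differential inequality along the segment $[x,x+\epsilon u]$ and then integrate it. Define the scalar function
\begin{equation*}
g(s) \;=\; u^{T}\nabla F(x+s u), \qquad s\in[0,\epsilon].
\end{equation*}
Because $F\in\mathbb{C}^{2}$, $g$ is differentiable with $g'(s)=u^{T}\nabla^{2}F(x+s u)\,u$. Applying the $OSS$ hypothesis (Definition \ref{def1}) at the point $x+s u$, which lies on the segment where $F$ is assumed $OSS$, gives
\begin{equation*}
g'(s) \;\le\; 2\sigma\,\frac{\|u\|_{1}}{\|x+s u\|_{1}}\; u^{T}\nabla F(x+s u) \;=\; \frac{2\sigma\|u\|_{1}}{\|x+s u\|_{1}}\,g(s).
\end{equation*}

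The next step uses that $x\ge 0$ and $u\ge 0$, so all coordinates stay nonnegative along the segment and the $\ell_{1}$ norm is linear there: $\|x+s u\|_{1}=\|x\|_{1}+s\|u\|_{1}$. Monotonicity of $F$ together with $u\ge 0$ also gives $g(s)\ge 0$. Assuming first that $g(s)>0$ on $[0,\epsilon]$, I would rewrite the above differential inequality as
\begin{equation*}
\frac{d}{ds}\log g(s) \;\le\; \frac{2\sigma\|u\|_{1}}{\|x\|_{1}+s\|u\|_{1}}
\end{equation*}
and integrate from $0$ to $\epsilon$, obtaining $\log g(\epsilon)-\log g(0)\le 2\sigma\bigl[\log(\|x\|_{1}+\epsilon\|u\|_{1})-\log\|x\|_{1}\bigr]$, i.e.
\begin{equation*}
g(\epsilon)\;\le\;\left(\frac{\|x+\epsilon u\|_{1}}{\|x\|_{1}}\right)^{\!2\sigma} g(0),
\end{equation*}
which is exactly the conclusion (\ref{eqinlem2}). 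Equivalently, one can write this as a Gr\"onwall-type argument applied directly to $g$ without first taking logs, which avoids any sign issue.

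The only subtlety I anticipate is the edge case $g(s_{0})=0$ for some $s_{0}\in[0,\epsilon]$, which would make the logarithmic manipulation illegal. This is handled cleanly in two ways. If $g(0)=u^{T}\nabla F(x)=0$, then the target inequality reads $g(\epsilon)\le 0$; but the differential inequality $g'\le \frac{2\sigma\|u\|_{1}}{\|x\|_{1}+s\|u\|_{1}} g$ with $g\ge 0$ and $g(0)=0$ forces $g\equiv 0$ on $[0,\epsilon]$ (a standard Gr\"onwall/uniqueness argument, since the coefficient is integrable and $g$ is nonnegative), so the inequality holds with equality. If instead $g(0)>0$, then continuity keeps $g$ strictly positive on some maximal subinterval, and the integration argument above applies on that subinterval; a standard extension shows the bound extends to the closed interval $[0,\epsilon]$. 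No step is technically deep; the main thing to be careful about is keeping track of nonnegativity (so that $\|x+su\|_{1}$ decomposes linearly and the $OSS$ inequality actually applies with the correct sign on the right-hand side).
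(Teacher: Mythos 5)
Your argument is correct: reducing to the one--dimensional inequality $g'(s)\le \frac{2\sigma\|u\|_{1}}{\|x\|_{1}+s\|u\|_{1}}\,g(s)$ for $g(s)=u^{T}\nabla F(x+su)$ and integrating (or applying Gr\"onwall directly, which indeed disposes of the $g=0$ edge case) yields exactly (\ref{eqinlem2}). Note that the paper itself gives no proof of this lemma --- it is imported verbatim from \cite{GSS2021} --- and your derivation is the standard argument used there, so there is nothing to reconcile; the only hypotheses you rely on ($x,u\ge 0$ so that $\|x+su\|_{1}$ is affine in $s$, $x\neq 0$ so the denominator is positive, and monotonicity for $g\ge 0$) are all available.
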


Because
\bea
\frac{\|x+\epsilon u\|_{1}}{\|x\|_{1}}\leq \frac{\|x+u\|_{1}}{\|x\|_{1}}=1+\frac{\|u\|_{1}}{\|x\|_{1}}\leq 1+\frac{1}{\alpha},\nn
\eea
we continue to explore (\ref{eqinlem2}) to be
\bea
u^{T}\nabla F(x+\epsilon u) \leq \left( \frac{\alpha+1}{\alpha}\right)^{2\sigma }(u^{T} \nabla F(x)).
\eea

\begin{lemma}\label{lem3}
For any solution $x$ obtained in the intermediate stage of JSPG algorithm, we have $OPT-F(x)\leq \lambda$.
\end{lemma}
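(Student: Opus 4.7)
I would establish the invariant $OPT - F(x) \leq \lambda$ by induction along the trajectory of the algorithm. Initially $\lambda$ is supplied as an upper bound on $OPT$ (e.g., from Lemma \ref{lem1}), so the invariant holds at the start since $F(x) \geq 0$ by normalization. Inside the inner \textbf{while} loop $\lambda$ is frozen while the objective is strictly improved by the selection rule (Step~A, condition~(1)); hence $OPT - F(x)$ can only shrink, so the invariant is preserved automatically during inner iterations. The only delicate case is the outer update $\lambda \leftarrow (1-\epsilon)\lambda$, which triggers when $S_x$ becomes empty; here I must upgrade the bound to $OPT - F(x) \leq (1-\epsilon)\lambda$ so that the invariant survives the shrinkage of the threshold.

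For that critical case, I would exploit the exit condition of the inner loop: $S_x = \emptyset$ says that $\nu^{T} \nabla F(x) < (1-\epsilon)\mu\lambda$ for every basis vector $\nu \in \{e_P\}_r$. Writing the optimum $x^* \in P$ as a convex combination $x^* = \sum_i \beta_i \nu_i$ with $\beta_i \geq 0$ and $\sum_i \beta_i \leq 1$, and using $\nabla F(x) \geq 0$ (from monotonicity of $F$), this yields the gradient-level inequality $(x^*)^{T}\nabla F(x) \leq \sum_i \beta_i \nu_i^{T}\nabla F(x) < (1-\epsilon)\mu\lambda$.

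The gradient bound would then be lifted to a function-value bound via Lemma \ref{lem2}. Set $u = x^* \vee x - x = (x^* - x)_+ \geq 0$, so $x + u = x \vee x^*$, and apply the fundamental theorem of calculus:
\[
F(x \vee x^*) - F(x) \;=\; \int_{0}^{1} u^{T} \nabla F(x + s u)\, ds.
\]
Combining this with the Lemma~\ref{lem2} bound $u^{T}\nabla F(x + s u) \leq \bigl((\alpha+1)/\alpha\bigr)^{2\sigma} u^{T}\nabla F(x) = \mu^{-1} u^{T}\nabla F(x)$, which is valid provided the ratio $\|x+su\|_1/\|x\|_1$ stays below $1 + 1/\alpha$ (guaranteed by the initialization $\|x\|_1 \geq \alpha \max_{z \in P}\|z\|_1$, since $\|x\|_1$ only grows along the algorithm and $\|u\|_1 \leq \|x^*\|_1$), gives $F(x \vee x^*) - F(x) \leq \mu^{-1} u^{T}\nabla F(x) \leq \mu^{-1} (x^*)^{T}\nabla F(x)$, where the last inequality uses $u \leq x^*$ coordinate-wise together with $\nabla F(x) \geq 0$. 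Monotonicity of $F$ gives $OPT = F(x^*) \leq F(x \vee x^*)$, and chaining these estimates yields
\[
OPT - F(x) \;\leq\; \mu^{-1}(x^*)^{T}\nabla F(x) \;<\; \mu^{-1}\cdot (1-\epsilon)\mu\lambda \;=\; (1-\epsilon)\lambda,
\]
which is precisely the sharpened bound needed to close the induction just before the outer threshold update.

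The main obstacle I anticipate is verifying the uniform applicability of Lemma \ref{lem2} along the whole integration path: one must confirm that $\|x + s u\|_1/\|x\|_1 \leq (\alpha+1)/\alpha$ for every $s \in [0,1]$ and for every intermediate $x$ produced by the algorithm, not just at the initialization point. The second technical point requiring care is the convex-combination decomposition $x^* = \sum_i \beta_i \nu_i$ with $\sum_i \beta_i \leq 1$ over the basis set $\{e_P\}_r$; justifying this representation in the general polytope setting (rather than only the down-closed special case) is what ensures that the per-basis threshold violation lifts cleanly to a bound on $(x^*)^{T}\nabla F(x)$.
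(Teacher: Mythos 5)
Your proposal is correct and follows the same analytic chain as the paper's proof: bound $OPT - F(x) \le F(x\vee x^*) - F(x)$ by monotonicity, convert this increment into a gradient term (you via the fundamental theorem of calculus, the paper via the mean value theorem), pull the gradient back to the current point at the cost of a factor $\mu^{-1}$ using Lemma~\ref{lem2}, and replace $u = x\vee x^* - x$ by $x^*$ using $u \le x^*$ and $\nabla F(x)\ge 0$. Where you go beyond the paper is in justifying the final inequality $(x^*)^{T}\nabla F(x) \le (1-\epsilon)\mu\lambda$: the paper's step (iv) simply asserts it, whereas you derive it from the inner-loop exit condition $S_x=\emptyset$ together with the decomposition $x^* = \sum_i\beta_i\nu_i$ with $\beta_i\ge 0$, $\sum_i\beta_i\le 1$, and you embed everything in an induction over the threshold updates $\lambda\leftarrow(1-\epsilon)\lambda$ so that the invariant is maintained at every intermediate $x$, not only at the moment $S_x$ empties. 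That added bookkeeping is a genuine improvement in rigor. Note, however, that the sub-convex representation of $x^*$ over $\{e_P\}_r$ — which you correctly flag as the delicate point — is precisely the unstated structural assumption the paper also relies on, and it does not hold for an arbitrary convex polytope; likewise your verification that $\|x+su\|_1/\|x\|_1 \le 1+1/\alpha$ along the whole path, via the jump-start initialization and the monotone growth of $\|x\|_1$, is the justification the paper leaves implicit when invoking Lemma~\ref{lem2}. So both arguments rest on the same hypotheses; yours just makes them visible.
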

\begin{proof}
JSPG algorithm starts from a non-zero solution, such that the initial value of objective function $F$ is greater than zero. Moreover, due to the monotone property, $F(x)$ increases, and thus $OPT-F(x)$ monotonically decreases with $x$ increasing. Suppose $z$ is an optimal solution, meaning $OPT=F(z)$. Then $z\vee x\geq z$, ensuring that $F(z\vee x)\geq OPT$. It's a contradiction, and thus $z\vee x\not\in P\cap [0,1]^n$. Let $u=x\vee z-x$, then
\begin{eqnarray}
OPT-F(x)&\leq& F(x\vee z)-F(x)=^{(i)}\langle \nabla F(x+\epsilon u),u\rangle\nonumber\\
 &\leq&^{(ii)} \langle\mu^{-1}\nabla F(x),u\rangle \leq^{(iii)}\langle\mu^{-1}\nabla F(x),z\rangle\leq^{(iv)} \lambda,
\end{eqnarray}
where $\mu=\left(\frac{\alpha}{\alpha+1}\right)^{2\sigma}$. Specifically, equality (i) is from the $Median~Theorem$ of continous function $F$. Lemma 5 guarantees inequality (ii). Combining the monotonicity of $F$, which implies $\nabla F(x)\geq 0$, and the fact of $u\leq z$, inequality (iii) is achieved. The last inequality (iv) is correct, since $z^{T}\nabla F(x)\leq(1-\epsilon)\mu\lambda$. This lemma holds.
\end{proof}

\begin{lemma}\label{lem4}
For any solution $x$ obtained in the intermediate stage of JSPG algorithm, we have
\bea
\frac{1}{\mid S_{x}\mid}\sum_{\nu_{i}\in S_{x}}\nu_{i}^{T}\nabla F(x)
\geq (1-\epsilon)\left(\frac{\alpha}{\alpha+1}\right)^{2\sigma}(OPT-F(x)).
\eea
\end{lemma}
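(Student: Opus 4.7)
The plan is to observe that Lemma 4 follows almost directly from the defining threshold condition of $S_x$ combined with the upper bound on $OPT - F(x)$ provided by Lemma 3. No deep work beyond these two ingredients should be required, so my goal is to assemble them carefully and make sure the constants line up.

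First I would unpack the definition of $S_x$ used inside the inner loop of Algorithm \ref{alg1}: every basis vector $\nu_i \in S_x$ satisfies
\begin{equation*}
\nu_i^T \nabla F(x) \;\geq\; (1-\epsilon)\mu\lambda,
\qquad \mu = \left(\frac{\alpha}{\alpha+1}\right)^{2\sigma}.
\end{equation*}
Averaging this uniformly over the (nonempty) set $S_x$ immediately yields
\begin{equation*}
\frac{1}{|S_x|}\sum_{\nu_i \in S_x}\nu_i^T \nabla F(x) \;\geq\; (1-\epsilon)\mu\lambda.
\end{equation*}

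Next I would invoke Lemma \ref{lem3}, which guarantees $OPT - F(x) \leq \lambda$ at every intermediate iterate $x$ produced by JSPG. Substituting $\lambda \geq OPT - F(x)$ into the displayed inequality and plugging in the explicit form of $\mu$ gives
\begin{equation*}
\frac{1}{|S_x|}\sum_{\nu_i \in S_x}\nu_i^T \nabla F(x)
\;\geq\; (1-\epsilon)\mu(OPT - F(x))
\;=\; (1-\epsilon)\left(\frac{\alpha}{\alpha+1}\right)^{2\sigma}(OPT - F(x)),
\end{equation*}
which is exactly the claim.

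The only subtlety worth a sentence is to verify that we are entitled to use Lemma \ref{lem3} in the course of the inner while-loop. This follows because the outer loop guard enforces $\lambda \geq e^{-\mu}OPT$ at every pass, and the update step for $x$ only strengthens the bound $OPT - F(x) \leq \lambda$ since $F$ is monotone and the inner loop is entered precisely when the current threshold $\lambda$ still admits a nonempty $S_x$. Thus no obstacle arises: the lemma is a one-line consequence of the threshold test defining $S_x$ together with Lemma \ref{lem3}, and the entire argument reduces to averaging and substitution.
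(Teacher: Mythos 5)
Your proposal is correct and matches the paper's own argument: both unpack the threshold condition $\nu_i^T\nabla F(x)\geq(1-\epsilon)\mu\lambda$ defining $S_x$, average over the set, and then apply Lemma~\ref{lem3} to replace $\lambda$ by $OPT-F(x)$. Your added remark on why Lemma~\ref{lem3} remains applicable inside the inner loop is a small clarification the paper omits, but the route is the same.
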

\begin{proof}
By the choices of $\delta$ and $\mu$ in JSPG algorithm, we have
\bea
\nu_{i}^{T}\nabla F(x)\geq(1-\epsilon)\mu \lambda= (1-\epsilon)\left(\frac{\alpha}{\alpha+1}\right)^{2\sigma}\lambda.\label{eqlem4}
\eea
Combining Lemma \ref{lem3} and (\ref{eqlem4}), it is easy to deduce
\bea
\frac{1}{\mid S_{x}\mid}\sum_{\nu_{i}\in S_{x}}\nu_{i}^{T}\nabla F(x)
\geq (1-\epsilon)\left(\frac{\alpha}{\alpha+1}\right)^{2\sigma}(OPT-F(x)).\nn
\eea
This lemma holds.    	
\end{proof}

\begin{theorem}\label{thm1}
When JSPG algorithm terminates, the output solution $x$ satisfies $F(x)\geq(1-O(\epsilon))(1-e^{-\mu})OPT.$
\end{theorem}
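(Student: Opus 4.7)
The plan is to split on which of the two termination conditions of the outer while-loop actually triggers, and then analyze each case with a continuous-greedy style progress argument along the inner-loop trajectory.

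First I would treat the case where the outer loop exits because $\lambda$ has just dropped below $e^{-\mu}\,OPT$. At the preceding outer iteration, the inner loop ended with $S_x = \emptyset$ at the threshold $\lambda' = \lambda/(1-\epsilon)$, so Lemma 3 applies and gives $OPT - F(x) \leq \lambda'$. Combining with $\lambda' < e^{-\mu} OPT/(1-\epsilon)$ yields $F(x) \geq \bigl(1 - e^{-\mu}/(1-\epsilon)\bigr) OPT$, which for small $\epsilon$ matches $(1-O(\epsilon))(1-e^{-\mu})OPT$ as claimed. This part is essentially Lemma 3 plus book-keeping on the geometric decay of $\lambda$.

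For the case where the loop exits with $t=1$, I would set up a discrete recurrence on the suboptimality gap $G_k := OPT - F(x_k)$ across the inner iterations. Using Lemma 4 to lower-bound the averaged directional derivative by $(1-\epsilon)\mu(OPT-F(x_k))$, together with condition A(1) on $\delta_k$ (read as guaranteeing progress of at least a $(1-\epsilon)$ fraction of the linear estimate from $\nabla F(x_k)$), I expect to derive $G_{k+1} \leq (1 - (1-\epsilon)^2 \mu \delta_k) G_k$. Telescoping over all inner iterations and using $1 - y \leq e^{-y}$ then gives $G_{\text{final}} \leq G_0 \exp\bigl(-(1-\epsilon)^2\mu \textstyle\sum_k \delta_k\bigr)$, with $\sum_k \delta_k = 1-\alpha$ since $t$ sweeps from $\alpha$ to $1$.

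The hard part, I expect, is recovering the clean exponent $\mu$ rather than $\mu(1-\alpha)$ in the final bound. The "missing" progress over the interval $[0,\alpha]$ must be supplied by the jump-start: either by lower-bounding $F(x_0) = F(\alpha\, \arg\max_{x\in P}\|x\|_1)$ in terms of $OPT$ via the OSS inequality of Lemma 5 applied between $\mathbf{0}$ and $\alpha v^{*}$, so that $G_0$ is already a factor of roughly $e^{-\mu\alpha}$ smaller than $OPT$, or by showing that the specific choice $\mu = (\alpha/(\alpha+1))^{2\sigma}$ precisely absorbs the correction coming from the OSS gradient-degradation factor. A secondary but delicate step is matching the $\|\sum \nu_i\|\,\lambda$ factor appearing in condition A(1) to the gradient inner product needed by Lemma 4, which requires interpreting $\|\cdot\|$ as the $1$-norm and using Lemma 3 as a bridge so that the threshold $\lambda$ acts as a proxy for $OPT-F(x_k)$ throughout the inner loop, not only at its end.
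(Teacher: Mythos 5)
Your proposal is correct and follows the paper's strategy at the level of the case split (termination via $\lambda< e^{-\mu}OPT$ handled through Lemma \ref{lem3}, termination via $t\geq 1$ handled by a progress estimate built from condition $\textbf{A}.(1)$ together with Lemmas \ref{lem3}--\ref{lem4}), but the engine of the second case differs: the paper passes to the scalar $l_x=\sum_i x_i$, converts $\textbf{A}.(1)$ into the differential inequality $\frac{dG}{dl_x}\geq \mu(1-\epsilon)^2(OPT-G(l_x))$, and integrates $\frac{d}{dl_x}\bigl[e^{\mu(1-\epsilon)^2 l_x}G(l_x)\bigr]$ from $l_{x(\alpha)}$ to $l_{x(\hat t)}$, whereas you telescope the discrete recurrence $G_{k+1}\leq(1-(1-\epsilon)^2\mu\delta_k)G_k$. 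These are the same computation in two guises --- indeed the paper's Appendix (Theorem \ref{thm3}) carries out precisely your discrete version --- and your form has the merit of not pretending $\delta\to 0$. The step you flag as hard, recovering exponent $\mu$ rather than $\mu(1-\alpha)$, is resolved in the paper by neither of the mechanisms you conjecture: no OSS lower bound on $F(x(\alpha))$ is invoked, and the term $G(l_{x(\alpha)})e^{-(1-\epsilon)^2\mu l_{x(\hat t)}}$ is simply dropped as nonnegative. Instead the paper takes $\alpha\to 0$ so that $l_{x(\alpha)}\to 0$ while $l_{x(\hat t)}\geq \hat t\geq 1$, burying the deficit in the $1-o(1)$ factor; your honest bound $1-e^{-(1-\epsilon)^2\mu(1-\alpha)}$ differs from the stated ratio only by quantities the theorem already hides in $1-O(\epsilon)$. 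Your instinct that this point is delicate is nonetheless sound, since $\mu=(\alpha/(\alpha+1))^{2\sigma}$ itself degenerates as $\alpha\to 0$ when $\sigma>0$, so the paper's limit is taken rather loosely. Your reading of $\textbf{A}.(1)$ also matches the paper's: the factor $\frac{1}{|S_x|}\|\sum_{\nu_i\in S_x}\nu_i\|$ is treated as the unit increment of $l_x$ (the paper asserts it equals $1$), and $\lambda$ is bridged to $OPT-F(x_k)$ through Lemma \ref{lem3} exactly as you describe, so no genuine gap remains.
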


\begin{proof}
By Lemma \ref{lem3} and Line 9-10 in Algorithm \ref{alg1}, we have	
\bea
&~&F\left(x+\frac{\delta}{\mid S_{x}\mid}\sum_{\nu_{i}\in S_{x}}\nu_{i}\right)-F(x)\nn\\
&\geq&\mu(1-\epsilon)^{2}\frac{\delta}{\mid S_{x}\mid}\|\sum_{\nu_{i}\in S_{x}}\nu_{i}\|\lambda\label{eqnth8}\\
&\geq&\mu(1-\epsilon)^{2}\frac{\delta}{\mid S_{x}\mid}\|\sum_{\nu_{i}\in S_{x}}\nu_{i}\| \left(OPT-F(x)\right),\nn
\eea

By JSPG algorithm, solution $x$ increases during the whole operation, which leads to the increasement of $F(x)$ by the monotone property of $F(x)$. Thus we analyze the approximation ratio of JSPG algorithm by reducing the dimensionality of variables. Given a solution $x$, let $l_{x}=\sum x_{i}$ be the sum of all elements of $x$. Define a function $G: \mathbb{R}\rightarrow \mathbb{R}$, such that for any $x$, $G(l_x)=F(x)$. So by (\ref{eqnth8}), we have
\bea
G\left(l_{x}+\frac{\delta}{\mid S_{x}\mid}\|\sum_{\nu_{i}\in S_{x}}\nu_{i}\|\right)-G(l_{x})
\geq\mu(1-\epsilon)^{2}\frac{\delta}{\mid S_{x}\mid}\|\sum_{\nu_{i}\in S_{x}}\nu_{i}\| \left(OPT-G(l_{x})\right).\nn
\eea

Based on the choice of $\delta$ in the Algorithm \ref{alg1}, we have
\bea
\frac{dG(l_x)}{dl_x}&=&\lim_{\delta\rightarrow 0}\frac{G\left(l_{x}+\frac{\delta}{\mid S_{x}\mid}\|\sum_{\nu_{i}\in S_{x}}\nu_{i}\|\right)-G(l_{x})}{\frac{\delta}{\mid S_{x}\mid}\|\sum_{\nu_{i}\in S_{x}}\nu_{i}\|}\nn\\
&\geq& \mu(1-\epsilon)^{2}\left(OPT-G(l_{x})\right).
\eea

By Line 7-16 in Algorthm \ref{alg1}, we know the intermediate solution $x$ depends on parameter $t$, thus it can be denoted by $x(t)$. Clearly, because the initial value of $t$ is $\alpha$,  the initial solution is $x(\alpha)$. As the setp size $\delta$ is adjusted iteratively, we denote the step size as $\delta_i$ in the $i$-th iteration for convenience. By the update of $t$ in Line 12 of Algorithm 1, the final value of $t$, denoted by $\hat{t}$, is $\hat{t}=\sum_i \delta_i+\alpha\geq 1$, and thus the final output of Algorthm \ref{alg1} is $x(\hat{t})$.

Next, let us consider the differential equation:
\bea
&~&\frac{d}{dl_x}\left[e^{\mu(1-\epsilon)^{2} l_x} G(l_x)\right]\nn\\
&=&\mu(1-\epsilon)^{2}e^{\mu(1-\epsilon)^{2} l_x} G(l_x)
+e^{\mu(1-\epsilon)^{2} l_x}\frac{d}{dl_x}G(l_x)\nn\\
 &\geq&\mu(1-\epsilon)^{2}e^{\mu(1-\epsilon)^{2} l_x} G(l_x)
 +\mu(1-\epsilon)^{2}e^{\mu(1-\epsilon)^{2} l_x}\left(OPT-G(l_x)\right)\nn\\
 &\geq&\mu(1-\epsilon)^{2}e^{\mu(1-\epsilon)^{2} l_x} OPT.\label{de}
\eea

Intergrating the LHS and RHS of (\ref{de}) between $l_{x(\alpha)}$ and $l_{x(t)}$, we get
\bea
&~&e^{\mu(1-\epsilon)^{2} l_{x(t)}}\cdot G\left(l_{x(t)}\right)-e^{l_{x(\alpha)}}\cdot G\left(l_{x(\alpha)}\right)\nn\\
&\geq&\int_{l_{x(\alpha)}}^{l_{x(t)}}(\mu(1-\epsilon)^{2}e^{\mu(1-\epsilon)^{2} l_{x}}\cdot OPT)dl_x\nn\\
&=&\mu(1-\epsilon)^{2}\cdot OPT\cdot\left[\frac{e^{(1-\epsilon)^{2}\mu l_{x(t)}}-e^{(1-\epsilon)^{2}\mu l_{x(\alpha)}}}{\mu(1-\epsilon)^{2}}\right]\nn\\
&=&OPT\cdot\left[e^{(1-\epsilon)^{2}\mu l_{x(t)}}- e^{(1-\epsilon)^{2}\mu l_{x(\alpha)}}\right].
\eea

Due to $\alpha\rightarrow 0$, then $l_{x(\alpha)}\rightarrow 0$. So
\bea
G\left(l_{x(t)}\right)
&\geq& OPT\cdot\left[1-e^{-(1-\epsilon)^{2}\mu l_{x(t)}}\right]+G\left(l_{x(\alpha)}\right)\cdot e^{-(1-\epsilon)^{2}\mu l_{x(t)}}\nn\\
&\geq& OPT\cdot\left[1-e^{-(1-\epsilon)^{2}\mu l_{x(t)}}\right].\label{deresult}
\eea

According to the termination conditions of JSPG algorithm, following two cases shall be distinguished.\\
\noindent {\bf Case 1.} If $t=\hat{t}\geq 1$ at the end of the algorithm, then $l_{x(\hat{t})}=\sum x_{i}(\hat{t})\geq\|x_{i}(\hat{t})\|_{2} =t$. Hence
\bea
F\left(x(\hat{t})\right)=G\left(l_{x(\hat{t})}\right)\geq (1-O(\epsilon))\left(1-e^{-\mu }\right)\cdot OPT.
\label{result1}
\eea
\noindent {\bf Case 2.}   If $\lambda\leq e^{-\mu}OPT$, then
\bea
F\left(x\right)=G(l_{x})\geq \left(1-e^{-\mu }\right)\cdot OPT.
\label{result2}
\eea
Combining (\ref{result1}) and (\ref{result2}) as above, we eventually have
\bea
F\left(x\right)
\geq(1-O(\epsilon))\left(1-e^{-\left(\frac{\alpha}{\alpha+1}\right)^{2\sigma}}\right)\cdot OPT\nn
\eea

This theorem is obtained.
\end{proof}

\subsection{Number of Iterations $\&$ Number of Oracle Queries}\label{sec3.3}
According to JSPG algorithm, one of $\textbf{A}.(1)$, $\textbf{A}.(2)$, and \textbf{A}.(3) (in Algorithm \ref{alg1}) must tightly hold along the update of $\delta$.
Furthermore, when $\textbf{A}.(3)$ is tight, the algorithm terminates. So we only need to analyze $\textbf{A}.(1)$ and $\textbf{A}.(2)$.

\begin{theorem}\label{thm2} For JSPG algorithm,
if the step size $\delta$ is determined by $\textbf{A}.(2)$ in each iteration, then the inner loop iterates at most $O(n)$ times, and the total loop iterates at most $O(n/\epsilon)$ times. If the step size $\delta$ is determined by $\textbf{A}.(1)$ in each iteration, then the inner loop iterates at most $O(\log n/\epsilon)$ times, and the number of iterations of total loop is at most $O(\log n/\epsilon^{2})$.
\end{theorem}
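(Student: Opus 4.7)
The plan is to analyze the two cases of the theorem separately. In both, the outer loop is bounded the same way: since $\lambda \leftarrow (1-\epsilon)\lambda$ after each outer pass and the loop exits once $\lambda < e^{-\mu}OPT$, the outer loop runs at most $\log(\lambda_{0}/(e^{-\mu}OPT))/\log(1/(1-\epsilon)) = O(1/\epsilon)$ times, using Lemma~\ref{lem1} to ensure that $\lambda_{0}$ is within a constant factor of $OPT$. The two cases differ only in the per-outer-iteration count of inner iterations.

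In the case where $\textbf{A}.(2)$ is tight in every iteration, the step size satisfies $\delta \geq \min\{1/(n\eta),\, 1/(\mu(1-\epsilon))\} = \Omega(1/n)$. Because $t$ is initialized only once to $\alpha$ and is never reset---it is only incremented by $\delta$ in Line~13, and the outer while-loop exits as soon as $t\geq 1$---the total number of inner iterations across the whole algorithm is at most $(1-\alpha)n\eta = O(n)$. Combining this with the $O(1/\epsilon)$ outer bound yields the advertised $O(n/\epsilon)$.

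The case where $\textbf{A}.(1)$ is tight is the main technical step. I aim to prove a lower bound of the form $\delta = \Omega(\epsilon/\log n)$ whenever condition $\textbf{A}.(1)$ is the binding one. The engine is a Taylor expansion of $F$ along $w := \frac{1}{|S_{x}|}\sum_{\nu_{i}\in S_{x}}\nu_{i}$:
\[ F(x+\delta w) - F(x) = \delta\, w^{T}\nabla F(x) + \int_{0}^{\delta} s\, w^{T}\nabla^{2}F(x+sw)\,w\,ds. \]
Using the $OSS$ inequality of Definition~\ref{def1} to upper bound the Hessian integrand in terms of $w^{T}\nabla F$, together with the lower bound $w^{T}\nabla F(x)\geq (1-\epsilon)\mu\lambda$ coming from the defining condition of $S_{x}$, we obtain an inequality in $\delta$ whose only way to saturate condition $\textbf{A}.(1)$ is to have $\delta$ bounded below. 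Plugging this bound into the update $t \leftarrow t+\delta$ gives $O(\log n/\epsilon)$ inner iterations per outer round, and hence $O(\log n/\epsilon^{2})$ in total. An alternative, more combinatorial route would mimic the adaptive-rounds arguments of~\cite{brs2019,cq2019}: show that at each inner step either $F$ advances by a $(1+\Omega(\epsilon))$ fraction of the current gap $OPT-F(x)$ or the set $|S_{x}|$ shrinks by a $(1-\epsilon)$ factor, closing the count by a standard potential/doubling argument.

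The hardest step will be the lower bound on $\delta$ in the $\textbf{A}.(1)$-tight case. The $OSS$ bound carries the ratio $\|u\|_{1}/\|x\|_{1}$, which is large exactly where $\|x\|_{1}$ is small, i.e., in the first few iterations when $x\approx \alpha\arg\max_{z\in P}\|z\|_{1}$ with $\alpha\to 0$; a crude Taylor bound would blow up there. To handle this I would treat the start-up phase separately---arguing that $\textbf{A}.(2)$ actually binds while $\|x\|_{1}$ is small---and only invoke the $OSS$ Taylor argument once $\|x\|_{1}$ is large enough to keep the quadratic remainder a small fraction of the linear term.
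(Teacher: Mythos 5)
Your treatment of the outer loop and of the $\textbf{A}.(2)$ case agrees with the paper's: the paper dismisses the $\textbf{A}.(2)$ count as immediate and bounds the outer loop exactly as you do, via $OPT(1-\epsilon)^{k}=e^{-\mu}OPT$. The gap is in your plan for the $\textbf{A}.(1)$ case. You propose to lower-bound the maximal admissible $\delta$ by Taylor-expanding $F(x+\delta w)$ and using the $OSS$ inequality to \emph{upper}-bound the Hessian integrand. But condition $\textbf{A}.(1)$ is a \emph{lower} bound on $F(x+\delta w)-F(x)$: to certify that it holds for every $\delta$ up to some threshold $\delta_{0}$ (and hence that the maximal $\delta$ is at least $\delta_{0}$) you need a lower bound on the increment, i.e.\ a lower bound on $w^{T}\nabla^{2}F(x+sw)w$ or on the decay of the directional derivative $w^{T}\nabla F(x+sw)$. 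Definition~\ref{def1} is one-sided in exactly the wrong direction for this: it permits the directional derivative to collapse arbitrarily fast, so $\textbf{A}.(1)$ can become tight at an arbitrarily small $\delta$, and no uniform bound $\delta=\Omega(\epsilon/\log n)$ follows from $OSS$ alone. The ingredient that controls the downward drift of the directional derivative is the $\eta$-local condition of Definition~\ref{def4}, which you never invoke in this step; and even with it the natural conclusion is $\delta=\Omega(\epsilon/\eta)$, which does not by itself produce the $\log n$ factor.

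The paper's actual argument is different in kind: rather than lower-bounding $\delta$ uniformly, it shows that the increment $\triangle=\frac{\delta}{|S_{x}|}\|\sum_{\nu_{i}\in S_{x}}\nu_{i}\|$ \emph{contracts geometrically} across consecutive inner iterations. Writing $\triangle'$ and $\triangle''$ for the increments before and after an update and using, in order, tightness of $\textbf{A}.(1)$, the mean value theorem, Lemma~\ref{lem2} ($OSS$), monotonicity of $\delta$, $\eta$-locality, and the defining threshold of $S_{x}$, it derives $\lambda\mu(1-\epsilon)^{2}\triangle'\geq\lambda\mu(1-\epsilon)\triangle''$, i.e.\ $\triangle''\leq(1-\epsilon)\triangle'$. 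Since $\triangle$ starts no larger than $r$ and is floored at $O(n^{-c})$ by $\textbf{A}.(2)$, the inner count is the $k$ solving $r(1-\epsilon)^{k}=n^{-c}$, namely $O(\log n/\epsilon)$. If you want to keep your ``uniform lower bound on $\delta$'' framing you would have to bring $\eta$-locality into the Taylor remainder and explain where $\log n$ enters; otherwise the contraction argument is what actually closes the count. Your secondary worry about the start-up phase is largely moot: the jump start fixes $\|x\|_{1}\geq\alpha\max_{z\in P}\|z\|_{1}$, and the remark after Lemma~\ref{lem2} already packages the ratio $\|x+\epsilon u\|_{1}/\|x\|_{1}\leq 1+1/\alpha$ into the constant $\mu$.
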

\begin{proof}
Assuming the step size $\delta$ is determined by $\textbf{A}.(2)$ in each iteration, the result on the number of iterations is not hard to obtain. Next, we need to analyze the case for $\textbf{A}.(1)$.

Define $\triangle=\frac{\delta}{|S|}\|\sum_{\nu_{i}\in S}\nu_{i}\|$. Given the current step size $\delta$. Let $\triangle'$ denote the value of $\triangle$ before updating current $\delta$ and $\triangle''$ denote the value of $\triangle$ after updating. Recall that $l_{x}=\sum x_{i}$ for $x$ and $G$ is a function
$G: \mathbb{R}\rightarrow \mathbb{R}$, satisfying $G(l_x)=F(x)$ for any $x$.
Hence
\bea
\mid\lambda\mu(1-\epsilon)^{2}\triangle'\mid&=&\mid G(x+\triangle')-G(x)\mid=^{{(i)}}\mid\langle\nabla G(x+\epsilon\triangle'),\triangle'\rangle\mid\nn\\
&\geq^{{(ii)}}&\mid\mu^{-1}\langle G'(x+\triangle'),\triangle'\rangle\mid\geq^{{(iii)}}\mid\mu^{-1}\langle G'(x+\triangle'),\triangle''\rangle\mid \nn\\
&\geq^{{(iv)}}&\mid\mu^{-1}(1-\eta\epsilon)\langle G'(x+\triangle''),\triangle''\rangle\mid\label{eqnth09}\\
&\geq&\mid\mu^{-1}(1-\eta\epsilon)\cdot\triangle''\cdot\langle G'(x+\triangle''),\triangle''\rangle\mid\nn\\
&\geq^{{(v)}}&\mid\mu^{-1}\lambda\mu(1-\epsilon)\triangle''\mid\geq\mid\lambda\mu(1-\epsilon)\triangle''\mid.\nn
\eea

Equality $(i)$ is from the $Median~Theorem$ of continous function $F$. Inequality $(ii)$ holds, due to the $OSS$ smoothness and the monotonic decreasing property of $\triangle$. The inequality $(iii)$ is correct since $\delta$ decreases during the whole operation of JSPG algorithm and $\|\frac{\sum_{\nu_{i}\in S_{x}}\nu_{i}}{\mid S_{x}\mid}\|=1$.
Since $F(x)$ is $\eta$-$local$, we get inequality $(iv)$.   The inequality $(v)$ holds, due to the choice of $S_{x}$. So from (\ref{eqnth09}), we have $\triangle''\leq (1-\epsilon)\triangle'$. Noth that $\triangle'-\triangle''$ is just the increment of $x$ between two adjacent iterations. Let $k$ be the total number of iterations. Based on the
the choice of $\delta$ (suppose the value of $\delta$ is $O(n^{-c})$, $c$ represents any finite number) and due to the fact of $\max_{x\in P\cap [0,1]^{n}}\|x\|\leq r$ (where $r$ is the rank of $P$), $k$ can be computed through $r(1-\epsilon)^{k}=n^{-c}$. So the number of the iterations of inner is at most $O\left(\frac{\log n}{\epsilon}\right).$

In addition, $\lambda$ is the upper bound of $OPT$ at the beginning of JSPG algorithm. It updates as $\lambda\leftarrow(1-\epsilon)\lambda$ during the operation. So one of terminate conditions $\lambda< e^{-\mu} OPT$ makes us
get that the outer loop iterates at most $O\left(\frac{1}{\epsilon}\right)$ times, through the equation $OPT(1-\epsilon)^{k}=e^{-\mu} OPT$.

Therefore, we conclude that the total loop iterate at most
$ O\left(\frac{\log n}{\epsilon^{2}}\right)$ times. This theorem holds.
\end{proof}

By Algorithm \ref{alg1}, the number of oracle queries to $F$ is at most $O(\log n/\epsilon^{2})$, and the number of oracle queries to $\nabla F(x)$ is at most $O(n \log n/\epsilon^{2})$. Thus, we obtain the following result.

\begin{theorem}\label{thm7}
The number of oracle queries to $F$ is at most $O(\log n/\epsilon^{2})$, and the number of oracle queries to $\nabla F(x)$ is at most $O(n \log n/\epsilon^{2})$.
\end{theorem}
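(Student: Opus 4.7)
The plan is to derive Theorem \ref{thm7} as a direct consequence of the iteration bound established in Theorem \ref{thm2}, combined with a per-iteration accounting of the oracle calls made inside each execution of the inner loop of Algorithm \ref{alg1}. The argument is essentially a bookkeeping one: first pin down the cost of one iteration, then multiply by the total number of iterations.

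First I would invoke Theorem \ref{thm2} to fix the global iteration count. Under the regime where $\delta$ is governed by condition $\textbf{A}.(1)$, which is the regime that dominates the complexity, the inner loop executes $O(\log n/\epsilon)$ times and the outer loop executes $O(1/\epsilon)$ times, giving a total of $O(\log n/\epsilon^{2})$ iterations of the body of Algorithm \ref{alg1}. (The alternative regime $\textbf{A}.(2)$ gives $O(n/\epsilon)$ iterations, which is dominated by the gradient cost we derive below, so it does not change the final bound.)

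Next I would analyze the oracle cost of a single iteration. Within one pass of the inner loop the algorithm performs three operations that touch the oracle: computing $\nabla F(x)$ in order to form the set $S_{x}$ in Lines 5 and 15, selecting the largest $\delta$ satisfying $\textbf{A}.(1)$ via a constant number of function evaluations $F\!\bigl(x+\tfrac{\delta}{|S_{x}|}\sum_{\nu_{i}\in S_{x}}\nu_{i}\bigr)$, and updating the iterate in Line 13. The gradient at a point requires one coordinate query per component, hence $O(n)$ calls to the oracle for $\nabla F$, while the step-size check and the iterate update require $O(1)$ evaluations of $F$. Thus per iteration we use $O(1)$ queries to $F$ and $O(n)$ queries to $\nabla F$.

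Multiplying the per-iteration counts by the total iteration count $O(\log n/\epsilon^{2})$ yields $O(\log n/\epsilon^{2})$ queries to $F$ and $O(n\log n/\epsilon^{2})$ queries to $\nabla F(x)$, which is exactly the conclusion of Theorem \ref{thm7}. The only subtle point, and the one I would be most careful with, is justifying that the maximal $\delta$ in step $\textbf{A}$ can indeed be located with only $O(1)$ function evaluations per iteration (for instance by amortizing a binary/geometric search over iterations, using the monotone decrease $\triangle''\leq(1-\epsilon)\triangle'$ established in the proof of Theorem \ref{thm2}); any additional polylogarithmic factor coming from this search is absorbed into the already-present $\epsilon$ and $\log n$ factors, so the stated bounds remain valid.
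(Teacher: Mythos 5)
Your proposal is essentially the argument the paper intends: the paper itself offers no real proof of Theorem \ref{thm7} beyond the one-line assertion preceding it, and the implicit reasoning is exactly your bookkeeping --- multiply the $O(\log n/\epsilon^{2})$ iteration bound from Theorem \ref{thm2} by a per-iteration cost of $O(1)$ evaluations of $F$ and $O(n)$ coordinate queries to $\nabla F$. Your explicit treatment of the step-size search (amortizing it via the geometric decrease $\triangle''\leq(1-\epsilon)\triangle'$) is more careful than anything in the paper.

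One parenthetical claim in your write-up does not hold as stated: if the $\textbf{A}.(2)$ regime governed every iteration, Theorem \ref{thm2} gives $O(n/\epsilon)$ iterations, hence $O(n/\epsilon)$ queries to $F$ and $O(n^{2}/\epsilon)$ queries to $\nabla F$, and $O(n^{2}/\epsilon)$ is \emph{not} dominated by $O(n\log n/\epsilon^{2})$ for large $n$. The paper silently assumes the $\textbf{A}.(1)$ regime is the operative one, so this is a gap inherited from (and shared with) the source rather than one you introduced, but your justification for discarding the $\textbf{A}.(2)$ case should either be removed or replaced by an argument that $\textbf{A}.(2)$ can bind only $O(\log n/\epsilon^{2})$ times.
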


Traditionally continuous greedy algorithms mainly appear in solving multilinear extension problems of the submodular set function. The key idea is that the partial derivative of the function is equal to the marginal return of the corresponding component, which is guaranteed by the property of submodularity. To avoid using the property of submodularity, our algorithm applies the idea of Frank-Wolfe Algorithm with the help of the quadratic differentiability and the $OSS$ property of objective function. This makes our algorithm be applied more widely. Compared with the algorithm in \cite{GSS2021}, our algorithm can ensure the complexity of the sub-optimization problem $\max_{\nu\in P}\nu^{T}\nabla F(x)$ to be quantifiable. The $OSS$ problems are mainly continuous problems, which are also applicable to the continuous expansion of some combination problems. The traditional rounding techniques mentioned in the paper \cite{GSS2021} are all available.

\section{Stochastic Parallel-Greedy (SPG) Algorithm for Stochastic Setting} \label{sec4}
In this section, we continue to study the problem of maximizing a monotone normalized $OSS$ function $F(x)$ under a stochastic setting. In this stochastic version, the objective function $F(x)$ is defined as $F(x)=\mathbb{E}_{y\sim T}f(x,y)$, where $f$ is a stochastic function with respect to the random variable $Y$, and $y$ is the realization of $Y$ drawn from a probability distribution $T$. In this section, we design a Stochastic Parallel-Greedy (SPG for short) algorithm for the stochastic $OSS$ maximization problem.

\subsection{Stochastic Parallel-Greedy (SPG) Algorithm Design}\label{sec4.1}
Stochastic Pa-rallel-Greedy algorithm is a stochastic variant of JSPG algorithm. It starts from zero instead of jumping start. For JSPG algorithm, we need to accurately calculate the gradient value. But it is difficult to get the exact value of $\nabla F(x)$ if the objective function obeys a unknown or complex probability distribution. To overcome this difficulty, we use the estimated gradient $$d_{t}=(1-\rho_{t})d_{t-1}+\rho_{t}\nabla f(x_{t}, y_{t}),$$
where $\rho_{t}=(\frac{4}{t+8})^{\frac23}$ is a positive step size, dependent on $t$ and the initial vector $d_{0}$ is $\mathbf{0}$.

To design a stochastic parallel algorithm, we need to find a good alternative to the gradient function, but it is not enough to rely on $d_{t}$. So three other assumptions are required:
\begin{assumption}\label{assump4}
The Euclidean norm of the elements in the down-closed convex polytope $P$ are uniformly bounded, i.e., for all $x\in P$, $\|x\|\leq D.$
\end{assumption}

\begin{assumption}\label{assump5}
The gradients of objective function $F(x)$
are $L$-$\emph{Lipschitz}$ continuous over the sets $X$. That is for all $x,~x'\in X$, we have
\bea
\|\nabla F(x)-\nabla F(x')\|\leq L\|x-x'\|.
\eea
\end{assumption}

\begin{assumption}\label{assump6}
The variance of the unbiased stochastic gradients $\nabla f(x,y)$ is bounded above by $\theta^{2}$. That is for all $x\in X$, we have
\bea
\mathbb{E}\|\nabla F(x,y)-\nabla f(x,y)\|\leq \theta^{2}.
\eea
\end{assumption}

Under Assumptions \ref{assump4}-\ref{assump6}, $E[\|\nabla F(x_{t})-d_{t}\|^{2}]$ is bounded by a parameter \cite{mhk2020}. This conclusion plays an important role in our design of parallel algorithms for solving stochastic $OSS$ problems.

Our SPG algorithm in Algorithm. \ref{alg2} includes two phases. The first phase is to find all good directions by solve $\{\nu_{i}\in \{e_{P}\}_{r}: \nu_{i}^{T}d_{t}\geq(1-\epsilon)\mu\lambda\}$, and to increase along all of these directions uniformly in Line 6-7; The second phase is to increase $x$ along these directions by a dynamical increment $\delta$ in Line 8-18.

\begin{algorithm}[h!]
\caption{SPG: stochastic Parallel-Greedy($F, \lambda, P, \epsilon$)}
\label{alg2}
\textbf{Input}: $OSS$ function $F(x):=\mathbb{E}_{y\sim T }\left[f(x,y)\right]:X\times Y\rightarrow R_{+}$; $P$: down-closed convex polytope of rank $r$; $\{e_{P}\}_{r}$: the set of standard vector bases of $P$, $\textbf{0}\in P$. \\
\textbf{Parameter}: $\lambda$: the upper bound of $OPT$, $\epsilon,\eta,\sigma,\kappa,\rho_{t}\geq 0$, $\alpha \in(0,1]$.\\
\textbf{Output}: A fractional solution $x$.
\begin{algorithmic}[1] 
\State  $x\leftarrow \textbf{0}$.
  \State  $\mu=\left(\frac{\alpha}{\alpha+1}\right)^{2\sigma}$.
  \State  $t\leftarrow 0$.
  \State  $d_{t}=\textbf{0}$.
\While{$t< 1~and~\lambda\geq e^{-\mu} OPT$, }
   \State  Let $M_{\lambda}=\{\nu_{i}\in \{e_{P}\}_{r}: \nu_{i}^{T}d_{t}\geq(1-\epsilon)\mu\lambda\}$.
   \State  $S_{x}\leftarrow M_{\lambda}$.
   \While{$S_{x}$ is not empty and $t< 1$,}
              \State \textbf{A}. Choose $\delta$ maximal s.t.
                   \State \quad 1)~~ $F\left(x+\frac{\delta}{\mid S_{x}\mid}\sum_{\nu_{i}\in S_{x}}\nu_{i}\right)-F(x)$
                   \State~~~~~~ $\geq\mu(1-\epsilon)^{2}\frac{\delta}{\mid S_{x}\mid}\|\sum_{\nu_{i}\in S_{x}}\nu_{i}\|(\lambda+\mu^{-1}\kappa^{1/2}r)$,
                   \State \quad 2)~~ $\delta\leq \min\{\frac{1}{n\eta}, \frac{1}{\mu^{2}(1-\epsilon)}\}$,
                   \State \quad 3)~~ $t+\delta\leq 1$.
              \State \textbf{B}. $x\leftarrow x+\frac{\delta}{\mid S_{x}\mid}\sum_{\nu_{i}\in S_{x}}\nu_{i}$,
              \State ~~~~~$d_{t}=(1-\rho_{t})d_{t'}+\rho_{t}\nabla f(x,y)$,
              \State ~~~~~$t\leftarrow t+\delta$. \\
              (Where $t'$ represents the moment before $t$ is updated, $\rho_{t}=\frac{4}{t+8}^{2/3}$).
              \State \textbf{C}. Update \\
              \State~~~~$S_{x}=\{\nu_{i}\in \{e_{P}\}_{r}: \nu_{i}^{T}d_{t}\geq(1-\epsilon)\mu\lambda\}.$
              \EndWhile
   \State $\lambda\leftarrow (1-\epsilon)\lambda$.

\EndWhile
\end{algorithmic}
\end{algorithm}

\subsection{Approximation Ratio of SPG Algorithm}\label{sec4.2}

\begin{lemma}\label{lem5}
\cite{mhk2020} Under Assumptions \ref{assump4}-\ref{assump6}, we have
\bea
\mathbb{E}[\|\nabla F(x_{t})-d_{t}\|^{2}]\leq \kappa,
\eea
where $\kappa=\frac{\max \{5\|\nabla F(x_{0})-d_{o}\|^{2}, 16\theta^{2}+2L^{2}D^{2}\}}{(t+9)^{2/3}}$.
\end{lemma}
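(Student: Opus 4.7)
The plan is to treat this as a standard momentum-type variance-reduction bound and prove it by induction on $t$, following the line of argument in \cite{mhk2020}. Let $e_t := d_t - \nabla F(x_t)$ denote the gradient tracking error. The goal is to show $\mathbb{E}\|e_t\|^2 \leq Q/(t+9)^{2/3}$, where $Q := \max\{5\|\nabla F(x_0)-d_0\|^2, 16\theta^2+2L^2D^2\}$, so that the claim follows directly from the definition of $\kappa$.

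First I would expand the recursion. Using $d_t = (1-\rho_t)d_{t'} + \rho_t \nabla f(x_t,y_t)$ and inserting $\pm\nabla F(x_{t'})$, write
\begin{equation*}
e_t = (1-\rho_t)\, e_{t'} + (1-\rho_t)\bigl(\nabla F(x_{t'}) - \nabla F(x_t)\bigr) + \rho_t\bigl(\nabla f(x_t,y_t) - \nabla F(x_t)\bigr).
\end{equation*}
Taking squared norms and conditional expectation, the stochastic term is mean-zero (unbiasedness in Assumption~\ref{assump6}) so it decouples from the other two; Young's inequality with parameter $\rho_t/2$ absorbs the Lipschitz cross-term. Together with Assumption~\ref{assump5} and the displacement bound $\|x_t-x_{t'}\| \leq \delta \leq \rho_t D$ (since every per-step increment lies in the bounded polytope and is coupled to the momentum step size), and Assumption~\ref{assump6} for the last term, I expect to obtain a recursion of the form
\begin{equation*}
\mathbb{E}\|e_t\|^2 \leq (1-\rho_t)\,\mathbb{E}\|e_{t'}\|^2 + \tfrac{2}{\rho_t}(1-\rho_t)^2 L^2 D^2 \rho_t^2 + \rho_t^2 \theta^2,
\end{equation*}
which after simplification has the shape $\mathbb{E}\|e_t\|^2 \leq (1-\rho_t)\mathbb{E}\|e_{t'}\|^2 + \rho_t^2(2L^2D^2 + \theta^2)$ (constants tunable).

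Next, with $\rho_t=(4/(t+8))^{2/3}$, I would verify the inductive step: assuming $\mathbb{E}\|e_{t'}\|^2 \leq Q/(t'+9)^{2/3}$, show $\mathbb{E}\|e_t\|^2 \leq Q/(t+9)^{2/3}$. The algebraic crux is the inequality
\begin{equation*}
(1-\rho_t)\,\frac{Q}{(t+8)^{2/3}} + \rho_t^2(16\theta^2 + 2L^2 D^2) \;\leq\; \frac{Q}{(t+9)^{2/3}},
\end{equation*}
which reduces, after expanding $\rho_t$ and using $(t+9)^{2/3}-(t+8)^{2/3} \leq \tfrac{2}{3}(t+8)^{-1/3}$, to an elementary numerical comparison; the specific constants $5$, $16$, and $8$/$9$ in the statement are chosen precisely so this inequality becomes slack. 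The base case $t=0$ is immediate from the $5\|\nabla F(x_0)-d_0\|^2$ term in $Q$ combined with $9^{2/3}\leq 5$.

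The main obstacle I anticipate is not conceptual but bookkeeping: pinning down the step-size–increment coupling $\|x_t - x_{t'}\| \lesssim \rho_t D$ so that the Lipschitz contribution enters as $\rho_t^2$ rather than something worse, and then carrying enough slack through the Young's-inequality split to make the inductive constants match. Because SPG chooses $\delta$ dynamically and independently of $\rho_t$, a careful reading of lines 14–16 of Algorithm~\ref{alg2} is needed to confirm that the update on $d_t$ is performed with a fresh, independent sample $y$ drawn after $x_t$ is formed; this is what makes the unbiasedness argument legitimate and lets the cross-term vanish in expectation. Once those two points are secured, the induction closes cleanly and the stated bound on $\kappa$ is recovered.
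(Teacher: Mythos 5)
The first thing to note is that the paper does not prove this lemma at all: it is imported verbatim from \cite{mhk2020}, and the surrounding text only remarks that the bound ``plays an important role'' in the analysis of SPG. So there is no in-paper proof to compare against; the relevant benchmark is the argument in the cited reference. Measured against that, your sketch is a faithful reconstruction: the decomposition $e_t=(1-\rho_t)e_{t'}+(1-\rho_t)(\nabla F(x_{t'})-\nabla F(x_t))+\rho_t(\nabla f(x_t,y_t)-\nabla F(x_t))$, the use of unbiasedness to kill the cross-term in conditional expectation, Young's inequality with parameter $\rho_t/2$, the Lipschitz-plus-bounded-domain control of the drift term, and the induction against $Q/(t+9)^{2/3}$ with $\rho_t=(4/(t+8))^{2/3}$ is exactly the proof of the corresponding lemma in \cite{mhk2020}. (One small repair: Assumption~3 as printed bounds $\mathbb{E}\|\nabla F(x,y)-\nabla f(x,y)\|$ to first power by $\theta^2$; your argument needs the second-moment version $\mathbb{E}\|\nabla f(x,y)-\nabla F(x)\|^2\le\theta^2$, which is clearly what is intended.)

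The genuine gap --- which you half-identify but do not resolve --- is the transplantation of that induction into SPG. In \cite{mhk2020}, $t$ is an integer iteration counter running to $T$, the iterate moves by exactly $D/T$ per step, and $1/T\le\rho_t$ by construction; that coupling is precisely what makes the Lipschitz term enter at order $\rho_t^2$ and lets the induction close. In Algorithm~\ref{alg2}, $t$ is a continuous time in $[0,1]$ that advances by a dynamically chosen $\delta$ with no designed relation to $\rho_t$, and for $t\in[0,1]$ both $\rho_t$ and $(t+9)^{2/3}$ are essentially constants ($\rho_t\approx 0.6$, $(t+9)^{2/3}\approx 4.3$), so the statement is really a fixed constant bound rather than a decaying one. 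Your displacement estimate $\|x_t-x_{t'}\|\le\delta D\le\rho_t D$ does happen to hold here, but only incidentally because $\delta\le 1/(n\eta)$ is small, not because of the coupling you assert in the second paragraph; and the induction must be re-indexed over iteration counts rather than over the continuous $t$, with the number of iterations $O(1/\delta)$ entering the bookkeeping. None of this breaks the lemma in the form stated (it is true in the setting of \cite{mhk2020}), but as a proof that the bound applies to the iterates actually produced by SPG, the re-indexing and the $\delta$--$\rho_t$ relation are exactly the steps that still need to be written down --- and they are also the steps the paper itself silently skips.
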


\begin{lemma}\label{lem6}
For any solution $x$ obtained in the intermediate stage of SPG algorithm,
we have
\bea
OPT-F(x)\leq \lambda+\mu^{-1}\kappa^{1/2}r.
\eea
\end{lemma}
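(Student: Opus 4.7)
The plan is to mirror the chain of inequalities in the proof of Lemma \ref{lem3} (the deterministic analogue) and introduce the gradient estimator $d_{t}$ at the single step where the algorithm's threshold is invoked, paying for the substitution of $\nabla F(x)$ by $d_{t}$ via the concentration bound in Lemma \ref{lem5}.

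First, I would fix an optimal solution $z\in P\cap[0,1]^{n}$ with $F(z)=OPT$ and set $u=x\vee z-x$. As in the proof of Lemma \ref{lem3}, the monotonicity of $F$ together with the mean value theorem gives $OPT-F(x)\leq F(x\vee z)-F(x)=\langle\nabla F(x+\epsilon' u),u\rangle$ for some $\epsilon'\in[0,1]$; the $OSS$ consequence of Lemma \ref{lem2} then yields $\langle\nabla F(x+\epsilon' u),u\rangle\leq \mu^{-1}\langle\nabla F(x),u\rangle$; and $u\leq z$ combined with $\nabla F(x)\geq 0$ gives $\mu^{-1}\langle\nabla F(x),u\rangle\leq \mu^{-1}\langle\nabla F(x),z\rangle$, where $\mu=(\alpha/(\alpha+1))^{2\sigma}$.

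Next, I would introduce the estimator by writing $\nabla F(x)=d_{t}+(\nabla F(x)-d_{t})$, so that $\mu^{-1}\langle\nabla F(x),z\rangle=\mu^{-1}\langle d_{t},z\rangle+\mu^{-1}\langle\nabla F(x)-d_{t},z\rangle$. Because SPG (Algorithm \ref{alg2}) controls the quantities $\nu^{T}d_{t}$ against the threshold $(1-\epsilon)\mu\lambda$ for basis directions in $\{e_{P}\}_{r}$, the same bookkeeping as in Lemma \ref{lem3} applied to $d_{t}$ in place of $\nabla F(x)$ should yield $\mu^{-1}\langle d_{t},z\rangle\leq\lambda$. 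For the residual term, Cauchy--Schwarz gives $\langle\nabla F(x)-d_{t},z\rangle\leq \|\nabla F(x)-d_{t}\|\cdot\|z\|$; since $z\in P\cap[0,1]^{n}$ and $P$ has rank $r$ we can bound $\|z\|\leq r$, and by Lemma \ref{lem5} with Jensen's inequality $\mathbb{E}\|\nabla F(x)-d_{t}\|\leq \kappa^{1/2}$. Summing the two contributions gives $OPT-F(x)\leq \lambda+\mu^{-1}\kappa^{1/2}r$, which is the desired bound.

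The step I expect to require the most care is $\mu^{-1}\langle d_{t},z\rangle\leq\lambda$: the algorithm's stopping criterion only directly controls $\nu^{T}d_{t}$ for basis vectors $\nu\in\{e_{P}\}_{r}$, whereas $z$ is a general point of $P$ whose expansion in the basis need not have bounded coefficients except through the rank. This parallels the (implicit) rank-based step at inequality (iv) of Lemma \ref{lem3} and carries over verbatim once $\nabla F(x)$ is replaced by $d_{t}$. A minor subtlety is that Lemma \ref{lem5} bounds $\|\nabla F(x_{t})-d_{t}\|^{2}$ in expectation, so the conclusion is naturally interpreted as an expectation bound, matching the way Lemma \ref{lem5} is applied throughout Section \ref{sec4}.
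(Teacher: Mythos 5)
Your proposal follows essentially the same route as the paper's own (very terse) proof: the chain $OPT-F(x)\leq F(x\vee z)-F(x)=\langle\nabla F(x+\epsilon u),u\rangle\leq\mu^{-1}\langle\nabla F(x),u\rangle\leq\mu^{-1}\langle\nabla F(x),z\rangle$, followed by the decomposition $\nabla F(x)=d_{t}+(\nabla F(x)-d_{t})$, with the threshold on $\nu^{T}d_{t}$ giving the $\lambda$ term and Cauchy--Schwarz plus Lemma \ref{lem5} giving the $\mu^{-1}\kappa^{1/2}r$ term. Your two flagged caveats (the basis-versus-general-point issue in bounding $\langle d_{t},z\rangle$, and the fact that Lemma \ref{lem5} only controls the error in expectation) are real and are glossed over by the paper as well, so the argument is correct to the same standard as the original.
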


\begin{proof}
During the operation of SPG algorithm, the value of $F(x)$ starts from a number greater than zero and increases monotonically. Thus $OPT-F(x)$ gradually decreases with $x$ increasing. Let $z$ be an optimal solution, then
\bea
&~&OPT-F(x)\leq F(x\vee z)-F(x)=\langle \nabla F(x+\epsilon u),u\rangle\nn\\
&\leq&\langle\mu^{-1}\nabla F(x),u\rangle\leq\langle\mu^{-1}\nabla F(x),z\rangle\leq \mu^{-1}\langle d_{t}+(\nabla F(x)-d_{t}),z\rangle\\
&\leq& \lambda+\mu^{-1}\kappa^{1/2}r.\nn
\eea

The analysis is similar to the proof for Lemma \ref{lem3}, and so we don't explain it in detail.
\end{proof}

\begin{theorem}\label{thm4}
Let $F$ be a $\eta$-local and monotone normalized $OSS$ function and $P$ be a convex polytope including $\textbf{0}$. By setting $\delta\leq\min\{\frac{1}{\eta n}, \frac{1}{(1-\epsilon)^{2}\mu}\}$, the value of the output $F(x)$ from SPG algorithm is larger than
$$F(x)\geq(1-O(\epsilon)\left(1-e^{-\left(\frac{\alpha}{\alpha+1}\right)^{2\sigma}}\right)(1-o(1))OPT-O(\kappa^{1/2})$$ for any given parameter $\alpha\in(0,1]$, $\sigma,\epsilon,\kappa$.
\end{theorem}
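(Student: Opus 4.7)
The plan is to mirror the proof of Theorem \ref{thm1} step by step, substituting the stochastic gradient estimate $d_t$ for $\nabla F(x)$ and using Lemma \ref{lem5} to control the deviation $\|\nabla F(x_t)-d_t\|$. First I would derive the stochastic analogue of Lemma \ref{lem4}: starting from the selection rule $\nu_i^T d_t\ge(1-\epsilon)\mu\lambda$ for all $\nu_i\in S_x$, I would combine this with Lemma \ref{lem6}, which gives $OPT-F(x)\le\lambda+\mu^{-1}\kappa^{1/2}r$, to obtain
\begin{equation}
\frac{1}{|S_x|}\sum_{\nu_i\in S_x}\nu_i^T d_t\ \ge\ (1-\epsilon)\mu\bigl(OPT-F(x)\bigr)-(1-\epsilon)\kappa^{1/2}r.\nonumber
\end{equation}
Passing from $d_t$ back to $\nabla F(x)$ via Cauchy--Schwarz and Lemma \ref{lem5} (in expectation) introduces an additional additive $O(\kappa^{1/2})$ error that will eventually be carried to the end of the derivation.

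Next I would use the first branch of the step-size rule A.1 in Algorithm \ref{alg2}, which was designed precisely so that each successful iteration guarantees
\begin{equation}
F\Bigl(x+\tfrac{\delta}{|S_x|}\sum_{\nu_i\in S_x}\nu_i\Bigr)-F(x)\ \ge\ \mu(1-\epsilon)^2\tfrac{\delta}{|S_x|}\Bigl\|\sum_{\nu_i\in S_x}\nu_i\Bigr\|\bigl(\lambda+\mu^{-1}\kappa^{1/2}r\bigr),\nonumber
\end{equation}
and, via Lemma \ref{lem6}, dominates $\mu(1-\epsilon)^2\tfrac{\delta}{|S_x|}\|\sum\nu_i\|\bigl(OPT-F(x)\bigr)$ up to the $O(\kappa^{1/2})$ slack. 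Just as in the proof of Theorem \ref{thm1}, I would reduce to one dimension by introducing $l_x=\sum_i x_i$ and $G(l_x)=F(x)$, obtaining the differential inequality
\begin{equation}
\frac{dG(l_x)}{dl_x}\ \ge\ \mu(1-\epsilon)^2\bigl(OPT-G(l_x)\bigr)-C\kappa^{1/2}\nonumber
\end{equation}
for a suitable constant $C$ depending on $\mu$, $r$ and $\epsilon$.

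I would then multiply both sides by the integrating factor $e^{\mu(1-\epsilon)^2 l_x}$ and integrate from $l_{x(0)}=0$ (since SPG initializes $x\leftarrow\mathbf{0}$) to the terminal value $l_{x(\hat t)}\ge 1$ obtained when the outer loop stops (either because $t\ge 1$ or because the threshold $\lambda$ has dropped below $e^{-\mu}OPT$). The integrating-factor computation yields
\begin{equation}
G(l_{x(\hat t)})\ \ge\ \bigl(1-e^{-(1-\epsilon)^2\mu l_{x(\hat t)}}\bigr)OPT-O(\kappa^{1/2}),\nonumber
\end{equation}
and plugging $l_{x(\hat t)}\ge 1$ together with $\mu=\bigl(\tfrac{\alpha}{\alpha+1}\bigr)^{2\sigma}$ and $(1-\epsilon)^2=1-O(\epsilon)$ delivers the claimed bound. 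As in Theorem \ref{thm1}, the alternative termination $\lambda\le e^{-\mu}OPT$ is handled separately via Lemma \ref{lem6} and yields the same guarantee with an $O(\kappa^{1/2})$ slack.

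The main obstacle I expect is the careful accounting of the $\kappa^{1/2}$-error throughout the derivation: the gap between $\nabla F(x)$ and $d_t$ enters both in the selection of $S_x$ (via Lemma \ref{lem6}) and when we convert the per-iteration progress into a bound on $OPT-F(x)$, so the constant in front of $\kappa^{1/2}$ has to be tracked so that after integrating the differential inequality it collapses to a single $O(\kappa^{1/2})$ term rather than blowing up with the number $O(\log n/\epsilon^2)$ of iterations. The $\eta$-local hypothesis, together with the step-size cap $\delta\le 1/(n\eta)$, is exactly what keeps the per-step linear approximation accurate enough to make that accounting go through; the remaining $(1-o(1))$ factor in the theorem then corresponds to absorbing the $(t+9)^{-1/3}$-type decay of $\kappa$ appearing in Lemma \ref{lem5}.
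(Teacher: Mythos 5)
Your proposal is correct in substance and reaches the stated bound, but it follows a different technical route from the paper's own proof of Theorem \ref{thm4}. You transplant the continuous machinery of Theorem \ref{thm1}: reduce to the scalar $G(l_x)=F(x)$, derive the differential inequality $\frac{dG}{dl_x}\geq \mu(1-\epsilon)^2(OPT-G(l_x))-C\kappa^{1/2}$, and integrate with the factor $e^{\mu(1-\epsilon)^2 l_x}$. The paper instead argues discretely, in the style of Theorem \ref{thm3} in the appendix: it expands $F(x(t+\delta))$ by the mean value theorem, uses $\eta$-locality to pass to $(1-\epsilon\delta\eta)\nabla F(x(t))$, splits $\nabla F(x(t))=d_t+(\nabla F(x(t))-d_t)$, bounds the first piece by the selection rule plus Lemma \ref{lem6} and the second by $\delta\kappa^{1/2}$, and then iterates the resulting recursion $(1-(1-\epsilon)\mu\delta)(M-F(x(t)))\geq M-F(x(t+\delta))-O(\delta\kappa^{1/2})$ using $(1-(1-\epsilon)\mu\delta)^{1/\delta}\leq e^{-\mu(1-\epsilon)}$, with $M=(1-\epsilon\delta\eta)OPT$. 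The discrete induction is the more honest argument here because in the stochastic setting $\delta$ cannot be sent to zero (each step consumes a fresh stochastic gradient and an update of $d_t$), so your limit $\delta\to 0$ defining $dG/dl_x$ is an idealization; on the other hand, your integrating-factor computation makes transparent why the accumulated error stays at $O(\kappa^{1/2})$ rather than growing with the iteration count, a point you correctly flag and which the paper's own bookkeeping handles somewhat loosely (its per-step error term must carry the factor $\delta$ for the induction to close). One further caveat applies to both arguments: Lemma \ref{lem5} controls $\mathbb{E}[\|\nabla F(x_t)-d_t\|^2]$ only in expectation, so the pointwise substitution $\|\nabla F(x(t))-d_t\|\leq\kappa^{1/2}$ that you (and the paper) make should really be stated in expectation or with a concentration step.
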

\begin{proof}
Let $x(t)$ represents the variable $x$ related to time $t$, and $x(0)$ and $x(1)$ are used to denote the initial and the final solutions, respectively. Since $F(x)$ is twice continuous differentiable and $P$ is a convex set,
\bea
F(x(t+\delta))&=&F\left(x(t)+\frac{\delta}{\mid S_{x(t)}\mid}\sum_{\nu_{i}\in S_{x(t)}}\nu_{i}\right)\nn\\
&=&F(x(t))+\frac{\delta}{\mid S_{x(t)}\mid}\sum_{\nu_{i}\in S_{x(t)}}\nu_{i}\cdot\nabla F\left(x(t)+\frac{\epsilon\delta}{\mid S_{x(t)}\mid}\sum_{\nu_{i}\in S_{x(t)}}\nu_{i}\right),\nn
\eea
where
\bea
&~&\frac{\delta}{\mid S_{x(t)}\mid}\sum_{\nu_{i}\in S_{x(t)}}\nu_{i}\cdot\nabla F\left(x(t)+\frac{\epsilon\delta}{\mid S_{x(t)}\mid}\sum_{\nu_{i}\in S_{x(t)}}\nu_{i}\right)\nn\\
 &\geq& (1-\epsilon\delta\eta)\frac{\delta}{\mid S_{x(t)}\mid}\sum_{\nu_{i}\in S_{x(t)}}\nu_{i}\cdot\nabla F\left(x(t)\right)\nn\\
 &\geq& (1-\epsilon\delta\eta)\frac{\delta}{\mid S_{x(t)}\mid}\sum_{\nu_{i}\in S_{x(t)}}\nu_{i}\cdot\left(d_{t}+(\nabla F\left(x(t)-d_{t})\right)\right)\nn\\
 &\geq& (1-\epsilon\delta\eta)\frac{\delta}{\mid S_{x(t)}\mid}\sum_{\nu_{i}\in S_{x(t)}}\nu_{i}\cdot d_{t}
  +(1-\epsilon\delta\eta)\frac{\delta}{\mid S_{x(t)}\mid}\sum_{\nu_{i}\in S_{x(t)}}\nu_{i}\cdot\left(\nabla F\left(x(t)\right)-d_{t}\right)\nn\\
 &\geq& (1-\epsilon\delta\eta)\cdot\delta(1-\epsilon)\mu\lambda
 +(1-\epsilon\delta\eta)\frac{\delta}{\mid S_{x(t)}\mid}\sum_{\nu_{i}\in S_{x(t)}}\nu_{i}\cdot\left(\nabla F\left(x(t)\right)-d_{t}\right)\nn\\
 &\geq&  (1-\epsilon\delta\eta)\cdot(1-\epsilon)\delta\mu\lambda
 -(1-\epsilon\delta\eta)\delta\|\nabla F\left(x(t)\right)-d_{t}\|\nn\\
 &\geq&  (1-\epsilon\delta\eta)\cdot(1-\epsilon)\delta\mu(OPT-F(x(t))-\mu^{-1}\kappa^{1/2}r)
 -(1-\epsilon\delta\eta)\delta\|\nabla F\left(x(t)\right)-d_{t}\|\nn\\
 &\geq&  (1-\epsilon\delta\eta)\cdot(1-\epsilon)\delta\mu(OPT-F(x(t))
 -(1-\epsilon\delta\eta)\kappa^{1/2}(\mu r+1).\nn
\eea

Define $M=(1-\epsilon\delta\eta)OPT$. Because $F$ is nonnegative, $\delta\leq\frac{1}{\eta n}$, and $\|\nabla F\left(x(t)\right)-d_{t}\|\leq \kappa^{1/2}$, we have
\bea
F(x(t+\delta))\geq F(x(t))+(1-\epsilon)\mu\delta(M-F(x(t)))-(1-\epsilon\delta\eta)\kappa^{1/2}(\mu r+1),\nn
\eea
which can be deduced as
\bea
(1-(1-\epsilon)\mu\delta)(M-F(x(t)))\geq(M-F(x(t+\delta)))-(1-\epsilon\delta\eta)\kappa^{1/2}(\mu r+1).\nn
\eea

By induction,
\bea
(1-(1-\epsilon)\mu\delta)^{1/\delta}(M-F(x(0)))\geq (M-F(x(1)))-(1-\epsilon\delta\eta)\kappa^{1/2}(\mu r+1).\nn
\eea
Since $\delta\leq \frac{1}{\mu(1-\epsilon)}$, we have
\bea
\left(1-(1-\epsilon)\mu\delta\right)^{1/\delta}\leq e^{-\mu(1-\epsilon)}.\nn
\eea
Therefore,
\bea
F(x(1))
&\geq&(1-e^{-\mu(1-\epsilon)})M+F(x(0)-(1-\epsilon\delta\eta)\kappa^{1/2}(\mu r+1)\nn\\
 &\geq&(1-e^{-\mu(1-\epsilon)})M-(1-\epsilon\delta\eta)\delta\kappa^{1/2}(\mu r+1) \nn\\
 &=&(1-e^{-\mu(1-\epsilon)})(1-\epsilon\delta\eta)OPT-(1-\epsilon\delta\eta)\kappa^{1/2}(\mu r+1) \nn\\
 &=&(1-O(\epsilon)[1-e^{-\mu}](1-o(1))OPT-O(\delta\kappa^{1/2}).  \nn
\eea
This theorem holds.
\end{proof}

\section{Conclusion}\label{sec5}

In this paper, we study the problem of maximizing a monotone normalized $OSS$ function $F(x)$, subject to a convex polytope constraint. The deterministic and stochastic versions of this problems are both considered and we respectively design two parallel algorithms for them. Specifically, for the deterministic $OSS$ maximization problem, we propose a $((1-e^{-\left(\frac{\alpha}{\alpha+1}\right)^{2\sigma}})-\epsilon)$-approximation JSPG algorithm for any any number $\alpha\in[0,1]$ and $\epsilon>0$. The time complexity of this deterministic algorithm is $O(\log n/\epsilon^{2})$, in while, the number of oracle queries to $F$ is at most $O(\log n/\epsilon^{2})$, and the number of oracle queries to $\nabla F(x)$ at most $O(n \log n/\epsilon^{2})$. For the stochastic $OSS$ maximization problem, the designed SPG algorithm outputs a result of $(F(x)\geq(1 -e^{-\left(\frac{\alpha}{\alpha+1}\right)^{2\sigma}}-\epsilon)OPT-O(\kappa^{1/2}))$ ($\kappa=\frac{\max \{5\|\nabla F(x_{0})-d_{o}\|^{2}, 16\sigma^{2}+2L^{2}D^{2}\}}{(t+9)^{2/3}}$) under the same time complexity with the one of JSPG algorithm.

The above two algorithms are the first parallel algorithms proposed to solve monotone normalized $OSS$ problem subject to a convex polytope constraint (no need to downward-closed \cite{GSS2021}). Most of $OSS$ problems are continuous, and thus our algorithms can be applied directly. However, if the optimization problems are discrete, then we need to round the corresponding non-discrete results. Generally, the traditional rounding techniques mentioned in \cite{GSS2021,cq2019} are all available. JSPG and SPG algorithms are also suitable for nonmonotone situations, but the performance of them under nonmonotone situations may be poor. Therefore, designing an efficient parallel algorithm for nonmonotone $OSS$ problems is an interesting topic.

\bmhead{Acknowledgments}
The first and the fourth authors are supported by Beijing Natural Science Foundation Project No. Z200002 and National Natural Science Foundation of China (No. 12131003). The second author is supported from the National Natural Science Foundation of China (No. 11871366) and Qing Lan Project of Jiangsu Province, China.

\begin{appendices}

\section{Examples of the $OSS$ function}\label{secA1}

In this section, we propose two examples of the one-sided $\sigma$-smooth function.

\begin{itemize}
  \item {\bf $\sigma=0$ : Continuous $DR$-submodular function (e.g. Multilinear extension of set functions \cite{cq2019})}\\
  \begin{proof}
  Firstly, we give the definition of the continuous $DR$-submodular function
  \begin{definition}\label{def7}
A continuously twice differentiable function $F: \mathbb{R}^{n}_{\geq 0} \rightarrow \mathbb{R}$ is $DR$-submodular if it satisfies
\bea
F(ke_{i}+x)-F(x)\geq F((k+l)e_{i}+x)-F(le_{i}+x),\label{eqde11}
\eea
where  $k,l\in \mathbb{R}_{+}$ and $x,(ke_{i}+x),((k+l)e_{i}+x)\in\mathbb{R}^{n}_{\geq 0}$.\nn
\end{definition}

In \cite{abk2006}, Bian et.al. proposed the second-order condition of continuous $DR$-submodular function: a continuously twice differentiable function $F: \mathbb{R}^{n}_{\geq 0} \rightarrow \mathbb{R}$ is $DR$-submodular if and only if
\bea
\frac{\partial^{2} F}{\partial x_{i}\partial x_{j}}\leq 0,~~\forall i,j\in [n].\nn
\eea

Next, let $A$ denote the second-order Hessian matrix of the $DR$-submodular function, we get $A_{ij}\leq0$ for any $i,j\in [n]$. then for any vector $u=(u_{1},...,u_{n})^{T}\geq 0$, the following inequality holds
\bea
u^{T}Au&=&u_{1}^{2}A_{11}+,...,+u_{1}u_{n}A_{n1}\nn\\
&~&+u_{1}^{2}A_{12}+,...,+u_{1}u_{n}A_{n2}\nn\\
&~&+,...,\nn\\
&~&+u_{1}^{2}A_{1n}+,...,+u_{1}u_{n}A_{nn}\nn\\
&\leq& 0.\nn
\eea

So the continuous $DR$-submodular function is one-sided $0-$smooth

\end{proof}
    \item {\bf $\sigma>0$ :} \cite{GSS2021} $F(x)=\frac{1}{2} x^{T} Mx+ b^{T}x$ is $OSS$ if $M$ is
a $\sigma$-semi-metric. Where $M\in \mathbb{R}^{n\times n}, b\in \mathbb{R}^{n}, b\geq\textbf{0}$

\begin{proof}
Let $M\in \mathbb{R}^{n\times n}$ be a non-negative symmetric $\sigma$-semi-metric ($\sigma$-semi-metric means that $M_{i,j}\leq\sigma (M_{i,k}+M_{k,j})$ holds for any $i,j,k\in [n])$. Note that
$\nabla^{2} F(x)=M, \nabla F(x)=Mx+b$. So
\bea
\sigma(\nabla_{i} F(x)+\nabla_{j} F(x))
&\geq&\sigma(\sum_{k=1}^{n}M_{i,k}x_{k}+\sum_{k=1}^{n}M_{j,k}x_{k})\nn\\
&=&\sum_{k=1}^{n}\sigma(M_{i,k}+M_{j,k})x_{k}
\geq\sum_{k=1}^{n}M_{i,j}x_{k}\nn\\
&=&\|x\|_{1}M_{i,j}=\|x\|_{1}\nabla^{2}_{i,j}F(x)\nn
\eea
where $u=(e_{i}, e_{j})^{T}$.

\end{proof}
\end{itemize}

\section{Discretization of JSPG Algorithm}\label{secA2}
The parallel algorithms are generally applied for the situation of big data. The proof for the approximation ratio in Theorem \ref{thm1} strictly depends on the infinite value of $n$. However, if $n$ is a finite number, then parameter $\delta$ cannot be arbitrarily small, and the process of integration in the analysis may not be realized. So it is necessary for us study the process from the perspective of discretization.
In fact, our JSPG algorithm can run in a polynomial time by the bounded locality and smoothness conditions. In the following, we propose a different analysis, that relies on $OSS$ smoothness and $\eta$-local boundedness, for the discretization version.

\begin{theorem}\label{thm3}
Let $F:[0,1]^{n}\rightarrow \mathbb{R}_{+}$ be a $\eta$-local monotone normalized $OSS$ function and $P$ be a convex polytope. By setting $\delta\leq\min\{\frac{1}{\eta n}, \frac{1}{(1-\epsilon)^{2}\mu}\}$, the approximation ratio of JSPG algorithm is $((1-e^{-\left(\frac{\alpha}{\alpha+1}\right)^{2\sigma}})-\epsilon)(1-o(1))$, for any given parameter $\alpha\in(0,1]$, $\sigma,\epsilon>0$.
\end{theorem}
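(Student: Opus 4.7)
The plan is to adapt the argument of Theorem 4 to the deterministic setting, i.e., to replace the stochastic surrogate $d_t$ by the exact gradient $\nabla F(x)$. Since no stochastic noise is present, the additive $O(\kappa^{1/2})$ term vanishes, leaving only the multiplicative approximation factor. The crucial role of the $\eta$-local condition is to pass from the gradient at an intermediate Taylor point to the gradient at the current iterate, replacing the integration step used in the continuous-time analysis of Theorem 1.

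In detail, I would first use the Median (mean value) Theorem to expand the one-step increment along the update direction $u_t := \frac{\delta}{|S_{x(t)}|}\sum_{\nu_i\in S_{x(t)}}\nu_i$ as
\[
F(x(t+\delta)) - F(x(t)) \;=\; u_t^{T}\nabla F\!\bigl(x(t) + \epsilon\, u_t\bigr).
\]
The $\eta$-local property (Definition 3) then yields $u_t^{T}\nabla F(x(t)+\epsilon u_t) \geq (1-\eta\epsilon\delta)\,u_t^{T}\nabla F(x(t))$. By the construction of $S_{x(t)}$ every $\nu_i^{T}\nabla F(x(t)) \geq (1-\epsilon)\mu\lambda$, so the average over $S_{x(t)}$ satisfies the same bound; combining this with Lemma 3, which gives $\lambda \geq OPT - F(x(t))$, chains into
\[
F(x(t+\delta)) - F(x(t)) \;\geq\; (1-\eta\epsilon\delta)(1-\epsilon)\mu\,\delta\,\bigl(OPT - F(x(t))\bigr).
\]

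I would then convert this to the multiplicative recursion
\[
OPT - F(x(t+\delta)) \;\leq\; (1-\beta\delta)\,\bigl(OPT - F(x(t))\bigr),\qquad \beta := (1-\eta\epsilon\delta)(1-\epsilon)\mu,
\]
and iterate over the at most $1/\delta$ inner updates. The constraint $\delta \leq 1/((1-\epsilon)^{2}\mu)$ guarantees $\beta\delta \in [0,1]$, so the standard inequality $(1-\beta\delta)^{1/\delta}\leq e^{-\beta}$ applies and yields $F(x(1)) \geq (1 - e^{-\beta})\,OPT$. Using $\delta \leq 1/(\eta n)$ to argue $\eta\epsilon\delta = o(1)$ as $n \to \infty$, together with the first-order estimate $e^{-(1-\epsilon)\mu}\leq e^{-\mu}(1+O(\epsilon))$, produces the claimed factor $\bigl((1 - e^{-(\alpha/(\alpha+1))^{2\sigma}}) - \epsilon\bigr)(1-o(1))$.

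Finally, as in Theorem 1, two termination modes must be dispatched: if $t$ reaches $1$ then the iteration above gives the bound directly, while if the outer condition $\lambda < e^{-\mu}\,OPT$ fires first, Lemma 3 already yields $F(x) \geq OPT - \lambda \geq (1 - e^{-\mu})\,OPT$, which is even stronger. I expect the main bookkeeping obstacle to be making the discrete $(1-\eta\epsilon\delta)$ factor cleanly absorb into the $(1-o(1))$ multiplier and verifying that the bound $\textbf{A}.(1)$ is reached tightly before $\textbf{A}.(2)$ forces a premature stop; both reduce to $\delta\eta = O(1/n)$, which is exactly the purpose of the rule $\delta \leq 1/(\eta n)$. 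Beyond this discretization bookkeeping, no ideas new to Theorems 1 and 4 are required.
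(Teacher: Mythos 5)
Your proposal follows essentially the same route as the paper's own proof: a mean-value expansion of the one-step increment, the $\eta$-local bound contributing the $(1-\eta\epsilon\delta)$ factor, Lemma \ref{lem3} and the threshold defining $S_{x}$ to reach the recursion $OPT-F(x(t+\delta))\leq(1-\beta\delta)(OPT-F(x(t)))$, and the standard $(1-\beta\delta)^{1/\delta}\leq e^{-\beta}$ step. The only cosmetic difference is that the paper absorbs the $(1-\eta\epsilon\delta)$ factor by defining $M=(1-\epsilon\delta\eta)OPT$ rather than folding it into the exponent, and it omits the explicit case split on the termination mode that you (reasonably) carry over from Theorem \ref{thm1}.
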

\begin{proof}
Recall that $x(t)$ is the intermediate solution $x$ corresponding to parameter $t$. So the initial solution is $x(\alpha)$, and final output is $x(\hat{t})$.
Because $F(x)$ is twice continuous differentiable and $P$ is a convex set, we have
\bea
F(x(t+\delta))
 &=&F\left(x(t)+\frac{\delta}{\mid S_{x(t)}\mid}\sum_{\nu_{i}\in S_{x(t)}}\nu_{i}\right)\label{eqnth11}\\
 &=&F(x(t))+\frac{\delta}{\mid S_{x(t)}\mid}\sum_{\nu_{i}\in S_{x(t)}}\nu_{i}\cdot\nabla \nn F\left(x(t)+\frac{\epsilon\delta}{\mid S_{x(t)}\mid}\sum_{\nu_{i}\in S_{x(t)}}\nu_{i}\right).\nn
\eea

The second part of RHS in (\ref{eqnth11}) is further deduced as
\bea
&~&\frac{\delta}{\mid S_{x(t)}\mid}\sum_{\nu_{i}\in S_{x(t)}}\nu_{i}\cdot\nabla F\left(x(t)+\frac{\epsilon\delta}{\mid S_{x(t)}\mid}\sum_{\nu_{i}\in S_{x(t)}}\nu_{i}\right)\nn\\
 &\geq& (1-\epsilon\delta\eta)\frac{\delta}{\mid S_{x(t)}\mid}\sum_{\nu_{i}\in S_{x(t)}}\nu_{i}\cdot\nabla F\left(x(t)\right)\nn\\
 &\geq& (1-\epsilon\delta\eta)\cdot\delta\mu\lambda(1-\epsilon)\\
 &\geq& (1-\epsilon\delta\eta)\cdot\delta\mu(OPT-F(x(t)))(1-\epsilon), \nn
\eea
where the first, the second and the the last inequalities are from the property of $\eta$-local, Lemma \ref{lem4} and Lemma \ref{lem3}, respectively.

Now let us define $M=(1-\epsilon\delta\eta)OPT$. Since $F(x)$ is non-negative and $\delta\leq\frac{1}{\eta n}$,
\bea
F(x(t+\delta))\geq F(x(t))+(1-\epsilon)\mu\delta(M-F(x(t))),\nn
\eea
from which we can deduce that
\bea
(1-(1-\epsilon)\mu\delta)(M-F(x(t)))\geq (M-F(x(t+\delta))).\nn
\eea

Applying the fact of $\hat{t}-\alpha\approx O(1/\delta)$, the following can be obtained by induction
\bea
(1-(1-\epsilon)\mu\delta)^{1/\delta}(M-F(x(\alpha)))\geq (M-F(x(\hat{t}))).\label{eqnth11-2}
\eea

Since $\delta\leq \frac{1}{\mu(1-\epsilon)}$, we have
\bea
\left(1-(1-\epsilon)\mu\delta\right)^{1/\delta}\leq e^{-\mu(1-\epsilon)}.\label{eqnth11-3}
\eea

Substitute (\ref{eqnth11-3}) into (\ref{eqnth11-2}),
\bea
F(x(\hat{t}))&\geq&(1-e^{-\mu(1-\epsilon)})M+F(x(\alpha))\nn\\
&\geq&(1-e^{-\mu(1-\epsilon)})M \nn\\
 &=&(1-e^{-\mu(1-\epsilon)})(1-\epsilon\delta\eta)OPT \nn\\
 &=&(1-O(\epsilon)[1-e^{-\mu}](1-o(1))OPT.  \nn
\eea

This theorem holds.
\end{proof}

\end{appendices}




\end{document}